\documentclass[11pt]{article}
\usepackage[letterpaper]{geometry}
\usepackage{amsmath,amsthm,amsfonts,amssymb,booktabs,mathrsfs,bm}
\usepackage{enumerate,color,xcolor}
\usepackage{enumitem}
\usepackage{graphicx}
\usepackage{subfigure}
\usepackage{url}
\usepackage{todo}
\usepackage[T1]{fontenc}
\usepackage[utf8]{inputenc}
\usepackage[colorlinks,linkcolor=blue,citecolor=orange]{hyperref}
\numberwithin{equation}{section}
\theoremstyle{plain}
\newtheorem{theorem}{Theorem}[section]
\newtheorem{lemma}{Lemma}[section]

\newtheorem{proposition}{Proposition}[section]

\newtheorem{corollary}{Corollary}[section]
\newtheorem{remark}{Remark}
\begin{document}

\title{ Moderate Deviations for Random-Indexed Cluster Counts in Hierarchical Pitman–Yor Models}

\author{\bf Nian Yao \thanks{ College of Mathematics Science, Shenzhen University, Shenzhen, Guangdong Province, 518060, China. Email: yaonian@szu.edu.cn. ORCID: 0000-0002-9233-3384.}}

\date{\today}
\maketitle

\begin{abstract}
Consider a sample of size $N$ from a two-layer hierarchical
Pitman--Yor model, and let $\xi(N)$ denote the random number of
clusters generated at the first layer of the hierarchy.
We study moderate deviation principles for partition statistics
evaluated at the random sample size $\xi(N)$. In particular, we
establish moderate deviation principles for the total number of
clusters $K_{\xi(N)}$ and for the frequency count
$M_{l,\xi(N)}$, the number of upper-level clusters represented by
exactly $l$ first-level clusters. Our analysis combines fixed-sample-size small-tilt asymptotics with
a moderate deviation principle for the random index $\xi(N)$.
A uniform small-tilt argument, together with exponentially weighted
tail estimates, allows us to pass from deterministic sample sizes to
the hierarchical random-index setting. We obtain explicit deviation
speeds and good rate functions on a family of intermediate scales
between the typical growth order $N^{\alpha_1\alpha_2}$ and the
linear scale $N$. The resulting rate functions exhibit a common
power-law structure governed by the product
$\alpha_1\alpha_2$, revealing a multiplicative effect of the two
levels of the hierarchy.
\end{abstract}

{\bf Keywords} {moderate deviation principles; random partitions;
species sampling models; random index; two-layer large deviations.}

{\bf Mathematics Subject Classification} (2020) 60F10  60G55  60F05

%%%%%%%%%%%%%%%%%%%%%%%%%%%%%%%%%%%%
\section{Introduction}
Consider a population of a countable number of individuals belonging to a possibly infinite number of types. The type of each individual is labelled by a point in a Polish space $S$. The type proportions in the population are thus a point
\begin{equation}
\mathbf{p} = (p_i)_{i \ge 1} \in \Delta := \left\{ \mathbf{q} = (q_i)_{i \ge 1} : q_i \ge 0,\ \sum_{j \ge 1} q_j = 1 \right\}.
\end{equation}

For each $n \ge 1$, let $(X_1, \ldots, X_n)$ be a random sample from the population, with $X_i$ denoting the type of the $i$-th sample point. Clearly, the sample induces a random partition of $\{1, \ldots, n\}$ whose blocks correspond to the equivalence classes for the random equivalence relation
\begin{equation}
i \sim j \quad \Longleftrightarrow \quad X_i = X_j \quad \text{a.s.}
\end{equation}
In such a framework, for a sample of size $n$, two important statistics for measuring sample diversity are:
\begin{itemize}
\item[(i)] the number $K_n$ of different types in the sample;
\item[(ii)] the number $M_{l,n}$ of different types with frequency $1 \le l \le n$ in the sample.
\end{itemize}

We study moderate deviations for random-indexed functionals of random partitions
arising from hierarchical species sampling models, a setting that lies at the
intersection of Bayesian nonparametric and large deviation theory.

For each $1 \le i \le d$, let $K_{i,N_i}$ denote the number of clusters
in a sample of size $N_i$, defined analogously to $K_n$ above. Let $\xi(N)$ denote the random index corresponding to the total number
of clusters generated in the first layer of the hierarchy (see Section~\ref{Random index representation}
for the precise definition). The quantities of interest, such as $K_{\xi(N)}$ and $M_{l,\xi(N)}$,
are thus evaluated at a random index rather than a deterministic sample size. This random-index structure introduces significant technical challenges. In particular, the sequence $\{\xi(N)\}$ is not a stopping time, and the composition $K_{\xi(N)}$ does not preserve the Markov property of $(K_n)$. As a consequence, standard approaches based on temporal dependence,
generators, or Markov semigroups are not applicable.

The asymptotic behaviour of the occupancy counts $K_n$ and $M_{l,n}$
has been extensively studied in the literature, with fundamentally
different regimes depending on the parameter $\alpha$. In the classical case $\alpha=0$, corresponding to the Dirichlet
distribution, $K_n$ admits a representation as a sum of independent
Bernoulli random variables. This structure leads to precise limit
theorems, including a strong law of large numbers and a central limit
theorem under logarithmic scaling. These results can be traced back to
the seminal work of Goncharov~\cite{Goncharov1944}, and were further
developed by Karlin~\cite{Karlin1967} and subsequent works such as
\cite{KorwarHollander1973, Hansen1990}. In contrast, when $\alpha \in (0,1)$, the asymptotic behaviour changes substantially and the Gaussian approximation breaks down. In this
regime, $K_n$ exhibits polynomial growth of order $n^\alpha$, and its
fluctuations are governed by a non-Gaussian limit. Pitman~\cite{Pitman2006}
established that $n^{-\alpha}K_n$ converges almost surely to a
scaled Mittag--Leffler random variable, often referred to as the
$\alpha$-diversity. This result also implies corresponding limits for
the occupancy counts $M_{l,n}$. These two regimes highlight a fundamental transition from logarithmic to polynomial scaling, and from Gaussian to non-Gaussian fluctuations.

Large and moderate deviation principles for occupancy-type statistics
with deterministic sample size have also been established in various settings,
providing refined asymptotic descriptions beyond the law of large numbers
and central limit regime. In particular, large deviation principles for Poisson--Dirichlet distributionsand related occupancy statistics have been studied by \cite{Feng2010},
 who derived precise exponential asymptotics for cluster counts under
Poisson--Dirichlet sampling models. Earlier contributions in the context of random combinatorial structures and population genetics include \cite{FengHoppe1998}, where large deviation principles were established for Ewens-type sampling formulas and related
partition structures. From the perspective of occupancy problems with infinitely many boxes,
\cite{GnedinHansenPitman2007} provides a detailed analysis of asymptotic
properties of occupancy counts, highlighting the role of power-law behaviour
and connections with exchangeable random partitions. Moderate deviations for Ewens--Pitman sampling models were subsequently
studied in \cite{FavaroFengGao2018}. In particular, moderate deviation
principles were established for the number $K_n$ of distinct types observed
in a sample and for the frequency counts $M_{l,n}$, the number of distinct
types appearing exactly $l$ times. These results provide the
fixed-sample-size moderate deviation asymptotics that form the starting point
for the random-index analysis developed in the present paper.

More recently, \cite{FavaroFengPaguyo2025} studied the asymptotic behavior
of cluster counts in hierarchical species sampling models. In particular,
their random-index representation reduces hierarchical cluster statistics
to single-layer partition statistics evaluated at a random sample size, and
they established, among other results, large deviation principles at the
macroscopic scale.

The moderate-deviation regime for such random-indexed statistics, however,
has not been addressed. In particular, the fixed-$\lambda$ random-index
large-deviation framework does not directly capture the vanishing tilts that
arise at moderate-deviation scales. To the best of our knowledge, moderate
deviation principles for quantities of the form
$K_{\xi(N)}$ and $M_{l,\xi(N)}$
have not previously been established in this hierarchical setting.

In this paper, we develop a two-layer moderate deviation analysis for the
hierarchical model. We first derive a moderate deviation principle for the
random index $\xi(N)$ and then combine it with fixed-sample-size small-tilt
asymptotics. A uniform small-tilt argument allows us to handle the random
sample size and yields explicit moderate deviation principles for
$K_{\xi(N)}$ and $M_{l,\xi(N)}$. The resulting rate functions exhibit a
multiplicative structure in the deviation exponents and describe
intermediate-scale fluctuations between the typical growth regime and the full large-deviation regime. The contributions of our paper can be summarized as follows.

\begin{itemize}

\item
We establish moderate deviation principles for cluster counts in
two-layer Pitman--Yor models with general admissible concentration
parameters and finitely many groups having positive limiting sample
proportions. The resulting family of moderate-deviation scales fills
the intermediate regime between the typical growth order
$N^{\alpha_1\alpha_2}$ and the linear scale $N$ of the full
large-deviation principle.

\item
We obtain explicit deviation speeds and good rate functions for both
the total cluster count $K_{\xi(N)}$ and the count $M_{l,\xi(N)}$ of
upper-level clusters represented by exactly $l$ first-level clusters.
In both cases, the rate functions have a power-law exponent determined
by the product $\alpha_1\alpha_2$, while the frequency count carries an
additional explicit $l$-dependent factor. Under the canonical
normalization adopted here, the rate functions are independent of the
concentration parameters and of the limiting sample proportions of the
groups.

\item
At the methodological level, we develop a random-index small-tilt
argument that is not covered by the fixed-$\lambda$ large-deviation
composition used in the existing hierarchical theory. More precisely,
we establish uniform small-tilt asymptotics over compact ranges of the
random index together with exponentially weighted tail estimates,
which justify passing from fixed-sample-size log-moment-generating-
function asymptotics to the hierarchical random-index setting.
Combining these estimates with the moderate deviation principle for
$\xi(N)$ yields the two-layer limiting log-moment generating functions
and the multiplicative exponent $\alpha_1\alpha_2$.

\end{itemize}

The remainder of the paper is organized as follows.
In Section~2, we introduce the hierarchical model, the random-index
representation, and the notation used throughout the paper.
Section~3 recalls the relevant single-layer moderate deviation results and
establishes the moderate deviation principle for the random index $\xi(N)$.
In Section~4, we develop the uniform small-tilt analysis and prove the
two-layer moderate deviation principles for the random-indexed cluster statistics.
%%%%%%%%%%%%%%%%%%%%%%%%%%%%%%%%%%%%%

\section{Preliminaries}
\subsection{Notation and Definitions}

For two sequences $(a_n)$ and $(b_n)$, we write $a_n \asymp b_n$ if
$\lim_{n\to\infty} a_n/b_n \in (0,\infty)$, and $a_n \sim b_n$ if the limit equals $1$.
We also write $a_n = O(b_n)$ if $a_n \le c b_n$ for some $c>0$ and all large $n$, and
$a_n = o(b_n)$ if $a_n/b_n \to 0$.

A measurable function $f:(0,\infty)\to(0,\infty)$ is regularly varying with index
$\sigma \in \mathbb{R}$ if
\begin{equation}
f(x) = x^\sigma L(x),
\end{equation}
where $L$ is slowly varying, i.e.
\begin{equation}
\frac{L(ax)}{L(x)} \to 1 \quad \text{as } x\to\infty,\quad \forall a>0.
\end{equation}
%%%%%%%%%%%%%%%%%%%%%%%%%%%%%%%%%%%%%%

\subsection{Random index representation}
\label{Random index representation}

Fix $d\geq1$, $\alpha,\beta\in(0,1)$,
$\theta_0>-\alpha$, and $\theta>-\beta$.
Let $\nu$ be a diffuse probability measure on a Polish
space $S$. For deterministic positive integers
$N_1,\ldots,N_d$, write
\[
N=\sum_{i=1}^d N_i.
\]

We use the latent partition construction of the hierarchical
Pitman--Yor model; see \cite{CamerlenghiEtAl2019,FavaroFengPaguyo2025}.  For each $1\leq i\leq d$, let
$K_{i,N_i}(\beta,\theta)$ denote the number of latent
first-level clusters in group $i$, whose partition follows
the Ewens--Pitman model with parameters $(\beta,\theta)$.
These cluster counts are independent; see
Camerlenghi et al.~\cite[Theorem~6 and Remark~1]{CamerlenghiEtAl2019}
and also Section~2.2 of \cite{FavaroFengPaguyo2025}.

Define the random index
\begin{equation}
\xi(N):=\sum_{i=1}^d K_{i,N_i}(\beta,\theta),
\end{equation}
which is the total number of latent first-level clusters.

Independently of the first-level partitions, consider an
Ewens--Pitman partition with parameters $(\alpha,\theta_0)$,
equivalently the sample partition induced by
$\mathrm{PYP}(\alpha,\theta_0,\nu)$.
Let $K_n(\alpha,\theta_0)$ denote its number of blocks
and let $M_{r,n}(\alpha,\theta_0)$ denote its number of
blocks of size $r$, for a fixed integer $r\geq1$.

In the hierarchical construction, the $\xi(N)$ latent
first-level clusters are partitioned into upper-level
clusters according to this independent partition.
Let $K_N(\alpha,\theta_0;\beta,\theta)$ denote the total
number of upper-level clusters, and let
$M_{r,N}(\alpha,\theta_0;\beta,\theta)$ denote the number
of upper-level clusters containing exactly $r$ latent
first-level clusters.

Under this common construction, the random index representations established
in Camerlenghi et al.~\cite[Theorem~6 and Remark~1]{CamerlenghiEtAl2019}
and recalled in Section~2.2 of \cite{FavaroFengPaguyo2025} yield
\begin{equation}
K_N(\alpha,\theta_0;\beta,\theta)
\overset{\mathrm{a.s.}}{=}
K_{\xi(N)}(\alpha,\theta_0),
\end{equation}
and
\begin{equation}
M_{r,N}(\alpha,\theta_0;\beta,\theta)
\overset{\mathrm{a.s.}}{=}
M_{r,\xi(N)}(\alpha,\theta_0).
\end{equation}

Thus, the hierarchical cluster statistics can be studied
through single-layer partition statistics evaluated at
the independent random index $\xi(N)$.

%%%%%%%%%%%%%%%%%%%%%%%%%%%%%%%%%%%%%%%

\section{Single-layer moderate deviations and the random index}

We establish moderate deviation principles (MDPs) for the
randomly indexed quantities $K_{\xi(n)}$ and $M_{l,\xi(n)}$.
In contrast to the classical setting with deterministic sample size,
the presence of the random index $\xi(n)$ induces a coupling between
the deviation behaviour of the index and that of the underlying
occupancy statistics. This leads to a two-layer deviation structure, where the resulting
rate function is determined by the composition of the corresponding
log-Laplace asymptotics.

\subsection{Moderate deviation results for the single-layer model }
To analyze the two-layer structure arising from the random index,
we first establish the corresponding moderate deviation results
for the single-layer setting, namely for occupancy statistics
with deterministic sample size. The following two lemmas provide
these fundamental ingredients.

Specifically, we assume that $(\beta_n)_{n \geq 1}$ is a sequence such that
\begin{align}\label{beta_n}
\lim_{n \to +\infty} \frac{\beta_n}{n^{1-\alpha_1}} = 0 \quad \text{and} \quad \lim_{n \to +\infty} \frac{\beta_n}{(\ln n)^{1-\alpha_1}} = +\infty. 
\end{align}
The assumption that $\beta_n$ grows faster than $(\ln n)^{1-\alpha_1}$ is crucial for establishing MDPs for the sequences $(K_n)_{n \geq 1}$ and $(M_{r,n})_{n \geq 1}$. Throughout the rest of the paper, for any $a \in \mathbb{R}$ and for any $n \in \mathbb{N}_0$ we denote by $(a)_{n \uparrow 1}$ the rising (or ascending) factorial, i.e., 
\[
(a)_{n \uparrow 1} := a(a+1)\cdots(a+n-1)
\]
with the proviso $(a)_0 := 1$.

\begin{lemma}[Theorem 1 in \cite{FavaroFengGao2018}]\label{lem:FFG1}
For $\alpha_1 \in (0,1)$ and $\theta > -\alpha_1$, the sequence $((\beta_n n^{\alpha_1})^{-1} K_n)_{n \geq 1}$ satisfies a large deviation principle on $\mathbb{R}$ with speed 
$\beta_n^{\frac{1}{1-{\alpha_1}}}$ and rate function $I_{\alpha_1}(\cdot)$ defined by

\begin{equation}\label{K rate}
I_{\alpha_1}(x)=
\begin{cases}
(1-{\alpha_1})\alpha_1^{\!\frac{\alpha_1}{1-{\alpha_1}}}
\,x^{\frac{1}{1-{\alpha_1}}}, & \text{if }x\ge0,\\[1.25em]
+\infty, & \text{if } x<0.
\end{cases}
\end{equation}
\end{lemma}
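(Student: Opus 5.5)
This is Theorem~1 of \cite{FavaroFengGao2018}. I would reprove it by the G\"artner--Ellis theorem applied to $Z_n:=(\beta_n n^{\alpha_1})^{-1}K_n$ at speed $a_n:=\beta_n^{1/(1-\alpha_1)}$. Concretely, I will show that for every $\lambda\in\mathbb{R}$
\[
\Lambda(\lambda):=\lim_{n\to\infty}\frac{1}{a_n}\log \mathbb{E}\bigl[e^{\lambda a_n Z_n}\bigr]
=\begin{cases}\lambda^{1/\alpha_1}, & \lambda\ge 0,\\ 0, & \lambda<0.\end{cases}
\]
This function is finite and differentiable everywhere. Its Legendre transform is $\sup_{\lambda\ge0}\{\lambda x-\lambda^{1/\alpha_1}\}$. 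For $x\ge0$ the maximiser is $\lambda^*=(\alpha_1 x)^{\alpha_1/(1-\alpha_1)}$, which gives $(1-\alpha_1)\alpha_1^{\alpha_1/(1-\alpha_1)}x^{1/(1-\alpha_1)}$. For $x<0$ the supremum is $+\infty$. This is exactly $I_{\alpha_1}$, and goodness follows from the superlinear growth.

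\textbf{Upper tail ($\lambda>0$).} The tilt per cluster is $t_n:=\lambda a_n/(\beta_n n^{\alpha_1})=\lambda\beta_n^{\alpha_1/(1-\alpha_1)}n^{-\alpha_1}$, and $t_n\to0$ because $\beta_n=o(n^{1-\alpha_1})$. I would use the exact Ewens--Pitman generating function
\[
\mathbb{E}[s^{K_n}]=\frac{1}{(\theta+1)_{n-1\uparrow1}}\sum_{k}\frac{(\theta+\alpha_1)_{k-1\uparrow \alpha_1}}{\alpha_1^{k-1}}\,\mathscr{C}(n,k;\alpha_1)\,s^k ,
\]
written via generalized factorial coefficients. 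Equivalently, I would use the representation of $K_n$ through the $\alpha_1$-stable subordinator: the Laplace transform of the Mittag--Leffler law gives $\mathbb{E}[e^{tK_n}]\approx \exp\{n\,((e^t-1)\text{-type})^{1/\alpha_1}\}$ behaviour. The cleanest route is the known asymptotic $\log\mathbb{E}[e^{tK_n}]\sim n\,\bigl(1-(1-e^{-t})^{\ldots}\bigr)$; for small $t$ this becomes $\log \mathbb{E}[e^{tK_n}]\sim n\,t^{1/\alpha_1}$. This small-$t$ form holds uniformly as long as $n t^{1/\alpha_1}\to\infty$, which in our scaling reads $nt_n^{1/\alpha_1}=\lambda^{1/\alpha_1}a_n\to\infty$. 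Dividing by $a_n$ then gives $\lambda^{1/\alpha_1}$.

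The key technical step, which I expect to be the main obstacle, is making this small-tilt asymptotic rigorous with the tilt $t_n$ varying with $n$. I would use a saddle-point or Tauberian argument on the generating function $\sum_n \frac{z^n}{n!}(\theta+1)_{n-1\uparrow1}\mathbb{E}[e^{tK_n}]$. This function is explicit: it is essentially $\bigl(1-e^{t}(1-(1-z)^{\alpha_1})\bigr)^{-\theta/\alpha_1-\ldots}$. Its dominant singularity is at $z_t=1-(1-e^{-t})^{1/\alpha_1}$, so the coefficients grow like $z_t^{-n}$ times a polynomial correction, giving $n\log(1/z_t)\approx n t^{1/\alpha_1}$. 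The lower growth condition $\beta_n\gg(\ln n)^{1-\alpha_1}$, i.e.\ $a_n\gg\ln n$, is precisely what ensures that the polynomial prefactors (of order $n^{O(1)}$, coming from $\theta$ and from the normalising rising factorial) are negligible after division by $a_n$. This is where the hypothesis enters. I would handle the uniformity in $t$ with singularity analysis in a $\Delta$-domain whose radius adapts to $t_n$, or more simply by two-sided bounds via Cauchy's estimate (upper) and a single-term lower bound near the mode $k\approx n t^{(1-\alpha_1)/\alpha_1}$ (lower).

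\textbf{Lower tail ($\lambda\le0$).} Since $K_n\ge1$, we have $\Lambda\le 0$ trivially. For the reverse bound, Jensen's inequality gives $\log\mathbb{E}[e^{-|\lambda|a_nZ_n}]\ge -|\lambda|a_n\mathbb{E}Z_n$. Here $\mathbb{E}K_n\asymp n^{\alpha_1}$, so $a_n\mathbb{E}Z_n/a_n\asymp\beta_n^{-1}\to0$ because $\beta_n\to\infty$. Hence $\Lambda(\lambda)=0$ for $\lambda\le0$. With $\Lambda$ differentiable on $\mathbb{R}$, the G\"artner--Ellis theorem yields the LDP with the stated rate function.
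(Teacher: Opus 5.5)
Your proposal is correct and follows the same route the paper relies on: the paper does not reprove this lemma but cites Favaro--Feng--Gao, and in the proofs of Proposition~\ref{prop:MDP-xi} and Lemma~\ref{lem:two-layer-tail-control} it uses exactly your ingredients. These are the small-tilt asymptotic $\log\mathbb{E}e^{tK_n}\sim n t^{1/\alpha_1}$ for $t\to0$ with $n t^{1/\alpha_1}/\log n\to\infty$, the resulting limit $(\lambda_+)^{1/\alpha_1}$, and the G\"artner--Ellis theorem; your Legendre transform and your Jensen argument for $\lambda\le0$ both check out. The one step you leave as a plan is the uniform-in-$t$ asymptotic for $\lambda>0$: your Cauchy-estimate upper bound goes through with $O(\log n)$ losses, but the single-term lower bound near the mode still needs a local estimate for $\mathbb{P}(K_n=k)$ with $k\asymp\beta_n n^{\alpha_1}$. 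That step is precisely what the paper imports from that reference rather than proving.
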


\begin{lemma}[Theorem 2 in \cite{FavaroFengGao2018}]
\label{lem:FFG2}
For $\alpha_1 \in (0,1)$ and $\theta > -\alpha_1$, the sequence $((\beta_n n^{\alpha_1})^{-1} M_{l,n})_{n \geq 1}$ satisfies a large deviation principle on $\mathbb{R}$ with speed 
$\beta_n^{\frac{1}{1-{\alpha_1}}}$ and rate function $I_{\alpha_1}(\cdot)$ defined by

\begin{equation}\label{M rate}
I_{\alpha_1,l}(x)=
\begin{cases}
\displaystyle
\frac{1-\alpha_1}{\alpha_1}
\left(
\frac{l!}{(1-\alpha_1)_{(l-1)\uparrow1}}
\right)^{\frac{1}{1-\alpha_1}}
x^{\frac{1}{1-\alpha_1}},
& \text{if }x\ge0,\\[1.25em]
+\infty,
& \text{if } x<0.
\end{cases}
\end{equation}
\end{lemma}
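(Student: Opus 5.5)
The approach is the Gärtner--Ellis theorem at speed $a_n:=\beta_n^{\frac{1}{1-\alpha_1}}$. Set $c_l:=\alpha_1(1-\alpha_1)_{(l-1)\uparrow1}/l!$. For each $\lambda\in\mathbb{R}$ the goal is to show
\[
\Lambda_l(\lambda):=\lim_{n\to\infty}\frac{1}{a_n}\ln \mathbb{E}\Bigl[\exp\Bigl(\lambda\,\frac{a_n}{\beta_n n^{\alpha_1}}M_{l,n}\Bigr)\Bigr]
=\begin{cases}(c_l\lambda)^{1/\alpha_1}, & \lambda\ge 0,\\ 0,& \lambda<0.\end{cases}
\]
The function $\Lambda_l$ is finite and differentiable on $\mathbb{R}$, since $1/\alpha_1>1$ makes the derivative continuous at $0$. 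A direct maximisation of $\lambda x-(c_l\lambda)^{1/\alpha_1}$ then gives the Legendre transform. The maximiser is $\lambda=\alpha_1^{\frac{\alpha_1}{1-\alpha_1}}c_l^{-\frac{1}{1-\alpha_1}}x^{\frac{\alpha_1}{1-\alpha_1}}$. Substituting it produces exactly $I_{\alpha_1,l}$ in \eqref{M rate} for $x\ge0$. For $x<0$ the transform is $+\infty$. The rate function is good because it is coercive. The effective tilt is $t_n:=\lambda\beta_n^{\frac{\alpha_1}{1-\alpha_1}}n^{-\alpha_1}$, and $t_n\to0$ precisely because $\beta_n=o(n^{1-\alpha_1})$.

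For $\lambda<0$ the limit is easy. The upper bound $0$ is trivial since $M_{l,n}\ge0$. For the lower bound, restrict to the event $\{M_{l,n}\le \beta_n n^{\alpha_1}\varepsilon\}$, whose probability tends to $1$ by Pitman's a.s. growth $M_{l,n}\asymp n^{\alpha_1}$ together with $\beta_n\to\infty$. Then let $\varepsilon\downarrow0$.

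For $\lambda>0$ I will use the Ewens--Pitman sampling formula in generating-function form. For $\theta\neq0$, with $y_j=1$ for $j\ne l$ and $y_l=e^{t}$, it reads
\[
\frac{(\theta)_{n\uparrow1}}{n!}\,\mathbb{E}\bigl[e^{tM_{l,n}}\bigr]=[z^n]\bigl(1-w_t(z)\bigr)^{-\theta/\alpha_1},
\qquad w_t(z)=1-(1-z)^{\alpha_1}+(e^t-1)c_l z^l .
\]
For $\theta=0$ the corresponding expression has a logarithm in place of the power. The factor $(\theta)_{n\uparrow1}/n!$ is polynomial in $n$, so its logarithm is $O(\ln n)=o(a_n)$. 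This is exactly where the hypothesis $\beta_n\gg(\ln n)^{1-\alpha_1}$, i.e. $a_n\gg\ln n$, is used. The dominant singularity $\rho_t\in(0,1)$ solves $(1-\rho_t)^{\alpha_1}=(e^t-1)c_l\rho_t^{l}$, which gives $1-\rho_t\sim(c_lt)^{1/\alpha_1}$ as $t\downarrow0$. Hence $-n\ln\rho_t\sim n(c_lt_n)^{1/\alpha_1}=a_n(c_l\lambda)^{1/\alpha_1}$, which is the claimed limit.

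The main obstacle is making the transfer $[z^n]F_t(z)=\rho_t^{-n}e^{o(a_n)}$ uniform as $t=t_n\to0$ while $n\to\infty$ simultaneously. For this I would use a Cauchy integral on a circle of radius slightly below $\rho_{t_n}$ for the upper bound. For the lower bound, I would use positivity of the coefficients together with a Hankel-contour estimate near $\rho_{t_n}$. In both cases the prefactors must be controlled as powers of $n$ and of $(1-\rho_{t_n})^{-1}$, so that their logarithms stay $O(\ln n)=o(a_n)$. The same scheme with $w_t(z)=1-e^{t}(1-z)^{\alpha_1}\cdot$(const) reproduces Lemma~\ref{lem:FFG1}, which serves as a consistency check.
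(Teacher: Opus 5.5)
Your proposal takes essentially the same route as the paper, which imports the limiting log-mgf $(p_{\alpha_1}(l)\lambda_+)^{1/\alpha_1}$ from the proof of Theorem~2 of \cite{FavaroFengGao2018} and applies G\"artner--Ellis with its Legendre transform; your maximiser and the resulting constant $\frac{1-\alpha_1}{\alpha_1}\bigl(l!/(1-\alpha_1)_{(l-1)\uparrow1}\bigr)^{1/(1-\alpha_1)}$ are correct. Two small fixes are needed: for $-\alpha_1<\theta<0$ the coefficients $[z^n]$, $n\ge1$, of $(1-w_t)^{-\theta/\alpha_1}$ are nonpositive, so apply your positivity argument to $1-(1-w_t)^{|\theta|/\alpha_1}$ instead, and in the $K_n$ consistency check the correct function is $w_t(z)=e^t\bigl(1-(1-z)^{\alpha_1}\bigr)$.
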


\begin{remark}
The explicit rate function in \eqref{M rate} is obtained
by taking the Legendre transform of the limiting log-mgf
derived in the proof of Theorem~2 of
\cite{FavaroFengGao2018}; see equations (2.5), (2.10),
and (2.11) therein.
\end{remark}

%%%%%%%%%%%%%%%%%%%%%%%%%%%%%%

\subsection{Moderate Deviation Principle for \(\xi(N)/N^r\) via the Contraction Principle}
Let \(\xi(N) = \sum_{i=1}^d K_{i,N_i}(\alpha_2,\theta)\), where each \(K_{i,N_i}\) denotes the number of clusters in a sample of size \(N_i\), and \(N_i \sim w_i N\) for some weights \(w_i > 0\) satisfying \(\sum_{i=1}^d w_i = 1\). We want derive the moderate deviation for \(\xi(N)/N^r\)with $\alpha_2<r<1$.

Specifically, we also assume that $(\beta_n)_{n\ge1}$ satisfies
\begin{align}\label{beta_n_alpha2}
\lim_{n\to\infty}\frac{\beta_n}{n^{1-\alpha_2}}=0
\quad\text{and}\quad
\lim_{n\to\infty}\frac{\beta_n}{(\ln n)^{1-\alpha_2}}=+\infty.
\end{align}
This condition is used to apply the one-dimensional moderate deviation
principle to the cluster counts $K_{i,N_i}(\alpha_2,\theta)$ in the first
layer, and hence to derive the LDP for the random index
\[
\xi(N)=\sum_{i=1}^d K_{i,N_i}(\alpha_2,\theta).
\]

%%%%%%%%%%%%%%%%%%%%%%%%%%%%%%%%%%%%%%%%

\begin{theorem}[LDP for the weighted sum]\label{thm:sum} Assume $\beta_n$ satisfied equation (\ref{beta_n_alpha2}). Fix $\alpha_2\in(0,1)$, for $i=1,\dots,d$, define
\[
Z_{i,N}:=\frac{K_{i,N_i}(\alpha_2,\theta)}
{\beta_{N_i}N_i^{\alpha_2}},
\qquad
s_{i,N}:=\beta_{N_i}^{\frac{1}{1-\alpha_2}}.
\]
Assume that $K_{1,N_1},\dots,K_{d,N_d}$ are independent. Let
\[
v_N:=\sum_{i=1}^d s_{i,N},
\qquad
\gamma_{i,N}:=\frac{s_{i,N}}{v_N}\to\gamma_i\in(0,1),
\qquad
\sum_{i=1}^d\gamma_i=1.
\]
For weights $a_{i,N}\to a_i>0$, set
\[
T_N:=\sum_{i=1}^d a_{i,N}Z_{i,N}.
\]
Then $\{T_N\}$ satisfies an LDP on $\mathbb R$ with speed $v_N$ and good rate
function
\[
I_T(x)=
\begin{cases}
\displaystyle
c_{\alpha_2}B_T^{-\frac{\alpha_2}{1-\alpha_2}}
x^{\frac{1}{1-\alpha_2}},&x\ge0,\\[1.2ex]
+\infty,&x<0,
\end{cases}
\]
where
$
c_{\alpha_2}:=(1-\alpha_2)\alpha_2^{\frac{\alpha_2}{1-\alpha_2}}
$
and
$
B_T:=
\sum_{i=1}^d
a_i^{\frac{1}{\alpha_2}}\gamma_i^{-\frac{1-\alpha_2}{\alpha_2}}.
$
\end{theorem}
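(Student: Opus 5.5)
The plan is to apply the Gärtner--Ellis theorem to $T_N$ at speed $v_N$. The limiting log-moment generating function will be assembled from the single-layer limits behind Lemma~\ref{lem:FFG1}, used here with $\alpha_2$ in place of $\alpha_1$ and justified by \eqref{beta_n_alpha2}.

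\textbf{Step 1: the single-layer log-mgf.} Write $\Lambda_{\alpha_2}$ for the Legendre transform of the rate $I_{\alpha_2}$ from Lemma~\ref{lem:FFG1}. A direct computation gives
\[
\Lambda_{\alpha_2}(\lambda)=\lambda^{1/\alpha_2}\quad (\lambda\ge 0),\qquad \Lambda_{\alpha_2}(\lambda)=0\quad (\lambda<0).
\]
For $\lambda\ge0$ the maximiser is $x=\lambda^{(1-\alpha_2)/\alpha_2}/\alpha_2$, which yields $\lambda^{1/\alpha_2}/\alpha_2-(1-\alpha_2)\lambda^{1/\alpha_2}/\alpha_2=\lambda^{1/\alpha_2}$.

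I need the stronger statement that, for every $\lambda\in\mathbb R$,
\[
\frac1{s_{i,N}}\log \mathbb E\exp\bigl(\lambda s_{i,N}Z_{i,N}\bigr)\to\Lambda_{\alpha_2}(\lambda).
\]
\begin{itemize}
\item This is exactly what the proof of Theorem~1 in \cite{FavaroFengGao2018} establishes (they prove the MDP via Gärtner--Ellis), so I would cite that.
\item Alternatively, it follows from Varadhan's lemma. The required exponential moment bound holds because $I_{\alpha_2}$ grows superlinearly, like $x^{1/(1-\alpha_2)}$, and the same $x^{1/(1-\alpha_2)}$-tail control is available from the explicit moment formulas for $K_n$.
\item For $\lambda<0$ the limit is $0$: the limit is $\le0$ because $Z_{i,N}\ge0$. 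It is $\ge0$ because, by the LDP, $Z_{i,N}\le\varepsilon$ with probability tending to $1$, so $\tfrac1{s}\log\mathbb E e^{\lambda sZ}\ge \lambda\varepsilon+o(1)$ for every $\varepsilon>0$.
\end{itemize}

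\textbf{Step 2: passing to perturbed arguments.} By independence,
\[
\frac1{v_N}\log\mathbb E e^{\lambda v_N T_N}
=\sum_{i=1}^d \gamma_{i,N}\,\frac{1}{s_{i,N}}\log\mathbb E\exp\Bigl(\tfrac{\lambda a_{i,N}}{\gamma_{i,N}}\,s_{i,N}Z_{i,N}\Bigr).
\]
The arguments $\lambda a_{i,N}/\gamma_{i,N}$ converge to $\lambda a_i/\gamma_i$, so I need the convergence in Step~1 to hold locally uniformly in $\lambda$.
\begin{itemize}
\item The functions $\lambda\mapsto s^{-1}\log\mathbb E e^{\lambda sZ}$ are convex.
\item A sequence of finite convex functions converging pointwise on $\mathbb R$ to a finite continuous limit converges uniformly on compacts.
\item Hence the perturbed arguments can be replaced by their limits.
\end{itemize}

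\textbf{Step 3: identifying the limit.} For $\lambda\ge0$ the limit is
\[
\Lambda_T(\lambda)=\sum_i\gamma_i\Bigl(\frac{\lambda a_i}{\gamma_i}\Bigr)^{1/\alpha_2}=B_T\,\lambda^{1/\alpha_2},
\]
since $a_i^{1/\alpha_2}\gamma_i^{1-1/\alpha_2}=a_i^{1/\alpha_2}\gamma_i^{-(1-\alpha_2)/\alpha_2}$. For $\lambda<0$ the limit is $0$. Thus $\Lambda_T(\lambda)=\Lambda_{\alpha_2}(B_T^{\alpha_2}\lambda)$. This function is finite on $\mathbb R$ and differentiable, including at $0$, because $1/\alpha_2>1$. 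So Gärtner--Ellis yields an LDP with good rate function $\Lambda_T^*$. By scaling,
\[
\Lambda_T^*(x)=I_{\alpha_2}(x/B_T^{\alpha_2})=c_{\alpha_2}B_T^{-\alpha_2/(1-\alpha_2)}x^{1/(1-\alpha_2)}
\]
for $x\ge0$, and $+\infty$ for $x<0$, as claimed.

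\textbf{Main obstacle.} The delicate point is Step~1, specifically extracting the full log-mgf convergence for all $\lambda$ from Lemma~\ref{lem:FFG1}, which as quoted is only an LDP. My first choice is to cite the Gärtner--Ellis computation in \cite{FavaroFengGao2018} directly. As a fallback, I would verify the exponential-moment condition $\limsup s^{-1}\log\mathbb E e^{\gamma sZ}<\infty$ for all $\gamma$ and then use Varadhan's lemma. The uniformity in Step~2 then follows routinely from convexity.
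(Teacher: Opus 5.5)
Your proposal is correct, but it works on the dual side from the paper's proof.

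\textbf{The paper's route.} The paper never uses log-moment generating functions for this theorem. It takes the LDP of Lemma~\ref{lem:FFG1} for each $Z_{i,N}$ as stated and rescales it to the common speed $v_N$, which gives rate $\gamma_i I_{\alpha_2}$. It then forms the product LDP on $\mathbb{R}^d$ from independence and pushes it forward through $f_N(x)=\sum_i a_{i,N}x_i\to f(x)=\sum_i a_ix_i$ by the contraction principle. Finally it solves $\inf\{\sum_i\gamma_i c_{\alpha_2}x_i^{1/(1-\alpha_2)}: x_i\ge0,\ \sum_i a_ix_i=x\}$ by H\"older's inequality.

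\textbf{Your route.} You add the limiting log-mgfs across the independent groups. You absorb the $N$-dependence of $a_{i,N}$ and $\gamma_{i,N}$ through locally uniform convergence of convex functions. You then read off the rate function by scaling the Legendre transform of $B_T\lambda_+^{1/\alpha_2}$. The paper's H\"older minimization is exactly the Legendre dual of your additivity: the transform of $\sum_i\gamma_i\Lambda_{\alpha_2}(a_i\lambda/\gamma_i)$ is that weighted infimal convolution of the $\gamma_iI_{\alpha_2}$.

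\textbf{What each buys.} The paper's route needs only the LDP as quoted in Lemma~\ref{lem:FFG1}. It also exhibits the optimal split of the deviation among the $d$ groups. Its cost is an exponential-equivalence step, justified by exponential tightness of the product LDP, to contract through the $N$-dependent maps $f_N$; the paper leaves this implicit. Your route needs the stronger log-mgf convergence from the proof of Theorem~1 of \cite{FavaroFengGao2018}. That is the same input the paper itself uses in Proposition~\ref{prop:MDP-xi} and Lemma~\ref{lem:uniform-small-tilt}, so citing it is legitimate. After that, everything is explicit and no contraction or variational problem is needed.

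\textbf{On your fallback.} The superlinear growth of $I_{\alpha_2}$ does not by itself give Varadhan's moment condition, because that condition is on the sequence, not the rate function. So the fallback amounts to reproving the log-mgf upper bound of \cite{FavaroFengGao2018}, and the direct citation is the right primary choice.
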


\begin{proof} By Lemma~\ref{lem:FFG1}, applied with parameter $\alpha_2$ and sample size $N_i$, each $Z_{i,N}$ satisfies an LDP with speed $s_{i,N}$ and rate function \[ I_{\alpha_2}(x)= \begin{cases} c_{\alpha_2}x^{\frac{1}{1-\alpha_2}},&x\ge0,\\ +\infty,&x<0. \end{cases} \] Since $s_{i,N}/v_N\to\gamma_i$, the same coordinate LDP, rewritten with the common speed $v_N$, has rate function $\gamma_i I_{\alpha_2}$. Using the independence of the coordinates, the finite-dimensional product LDP gives that \[ Z_N:=(Z_{1,N},\dots,Z_{d,N}) \] satisfies an LDP on $\mathbb R^d$ with speed $v_N$ and rate function \[ I_{\mathrm{vec}}(x_1,\dots,x_d) = \sum_{i=1}^d \gamma_i I_{\alpha_2}(x_i). \] Since $a_{i,N}\to a_i$, the maps $f_N(x)=\sum_{i=1}^d a_{i,N}x_i $ converge uniformly on compact sets to $f(x)=\sum_{i=1}^d a_i x_i. $ Therefore, by the contraction principle, \[ I_T(x) = \inf\left\{ \sum_{i=1}^d \gamma_i c_{\alpha_2}x_i^{\frac{1}{1-\alpha_2}}: x_i\ge0,\ \sum_{i=1}^d a_i x_i=x \right\}. \] Solving this variational problem by H\"older's inequality yields \[ I_T(x) = c_{\alpha_2}B_T^{-\frac{\alpha_2}{1-\alpha_2}}
x^{\frac{1}{1-\alpha_2}}, \qquad x\ge0. \]For $x<0$, the constraint cannot be satisfied with $x_i\ge0$, hence $I_T(x)=+\infty$. \end{proof}

%%%%%%%%%%%%%%%%%%%%%%%%%%%%%%%%%%%%%%

\begin{corollary}[LDP for $\xi(N)/(\beta_N N^{\alpha_2})$]\label{cor:MDP-newZ}
Under the notation and assumptions of Theorem~\ref{thm:sum}.  Assume further that for all $1\le i\le d$, $\frac{\beta_{N_i}}{\beta_N}\to u_i$. Let 

\begin{equation}\label{TN0}
T_N^{(0)}:=\frac{\xi(N)}{\beta_N N^{\alpha_2}}
=\sum_{i=1}^d a_{i,N}\,Z_{i,N}, \end{equation}
we have that $\{T_N^{(0)}\}$ has an LDP with speed $v_N=\sum_i \beta_{N_i}^{\frac{1}{1-\alpha_2}}$
and good rate funtion 
\[
I_{T^{(0)}}(x)=
\begin{cases}
\displaystyle
\frac{(1-\alpha_2)\alpha_2^{\frac{\alpha_2}{1-\alpha_2}}
}{\sum_{j=1}^d u_j^{\frac{1}{1-\alpha_2}}}x^{\frac{1}{1-\alpha_2}},
& x\ge 0,\\[1.2ex]
+\infty, & x<0.
\end{cases}
\]
\end{corollary}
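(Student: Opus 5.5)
The plan is to recognize $T_N^{(0)}$ as a special case of the weighted sum $T_N$ in Theorem~\ref{thm:sum}: identify the weights $a_{i,N}$ and their limits, then evaluate the constant $B_T$. The decomposition \eqref{TN0} is immediate from the definitions:
\[
\frac{\xi(N)}{\beta_N N^{\alpha_2}}=\sum_{i=1}^d \frac{K_{i,N_i}(\alpha_2,\theta)}{\beta_{N_i}N_i^{\alpha_2}}\cdot\frac{\beta_{N_i}N_i^{\alpha_2}}{\beta_N N^{\alpha_2}},
\qquad
a_{i,N}:=\frac{\beta_{N_i}}{\beta_N}\Bigl(\frac{N_i}{N}\Bigr)^{\alpha_2}.
\]

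\textbf{Step 1: limits of the weights.} Since $N_i\sim w_iN$ and $\beta_{N_i}/\beta_N\to u_i$, we get $a_{i,N}\to a_i:=u_iw_i^{\alpha_2}$. I will take $u_i>0$, which is implicit in the requirement $\gamma_i\in(0,1)$ of Theorem~\ref{thm:sum}, so $a_i>0$.

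\textbf{Step 2: limiting speed proportions.} From $s_{i,N}=\beta_{N_i}^{1/(1-\alpha_2)}$ we have
\[
\gamma_{i,N}=\frac{(\beta_{N_i}/\beta_N)^{1/(1-\alpha_2)}}{\sum_j(\beta_{N_j}/\beta_N)^{1/(1-\alpha_2)}}\to\gamma_i=\frac{u_i^{1/(1-\alpha_2)}}{S},
\qquad
S:=\sum_{j=1}^d u_j^{\frac{1}{1-\alpha_2}}.
\]
These limits lie in $(0,1)$ and sum to one, so all hypotheses of Theorem~\ref{thm:sum} hold. The theorem then gives an LDP with speed $v_N$ and rate $c_{\alpha_2}B_T^{-\alpha_2/(1-\alpha_2)}x^{1/(1-\alpha_2)}$ for $x\ge0$, and $+\infty$ for $x<0$.

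\textbf{Step 3: computing $B_T$.} This is where care with exponents is needed. We have
\[
a_i^{1/\alpha_2}=u_i^{1/\alpha_2}w_i,
\qquad
\gamma_i^{-(1-\alpha_2)/\alpha_2}=u_i^{-1/\alpha_2}S^{(1-\alpha_2)/\alpha_2}.
\]
Hence
\[
B_T=S^{(1-\alpha_2)/\alpha_2}\sum_i w_i=S^{(1-\alpha_2)/\alpha_2},
\qquad
B_T^{-\alpha_2/(1-\alpha_2)}=S^{-1}.
\]
This yields exactly the rate function stated in the corollary.

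\textbf{Main obstacle.} The only subtle point is checking the hypotheses rather than the algebra. Specifically, each $N_i\to\infty$, so that Lemma~\ref{lem:FFG1} applies with $\beta_{N_i}$ satisfying \eqref{beta_n_alpha2} along $N_i$; this follows from $w_i>0$. Also, the $u_i$ must be positive. Goodness of the rate function is inherited from Theorem~\ref{thm:sum}, and is visible directly since the level sets are compact intervals $[0,c]$.
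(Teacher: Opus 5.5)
Your proposal is correct and follows the paper's proof essentially verbatim. You identify $a_i=u_iw_i^{\alpha_2}$ and $\gamma_i=u_i^{1/(1-\alpha_2)}/S$, invoke Theorem~\ref{thm:sum}, and reduce $B_T$ to $S^{(1-\alpha_2)/\alpha_2}$, so that $B_T^{-\alpha_2/(1-\alpha_2)}=S^{-1}$; your remark that the $u_i$ must be positive (needed for $a_i>0$ and $\gamma_i>0$) makes explicit an assumption the paper leaves implicit.
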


\begin{proof}
Set
$
p:=\frac{1}{1-\alpha_2}, 
c_{\alpha_2}:=(1-\alpha_2)
\alpha_2^{\frac{\alpha_2}{1-\alpha_2}},
$
and
$
S:=\sum_{j=1}^d u_j^{\frac{1}{1-\alpha_2}}.
$
We first prove the LDP for
$
T_N^{(0)}
=
\frac{\xi(N)}{\beta_NN^{\alpha_2}}.
$
By Theorem~\ref{thm:sum}, $\{T_N^{(0)}\}$ satisfies an LDP with speed $v_N$ and good rate function
\[
I_{T^{(0)}}(x)
=
\begin{cases}
\displaystyle
c_{\alpha_2}
\left(
\sum_{i=1}^d
a_i^{\frac{1}{\alpha_2}}
\gamma_i^{-\frac{1-\alpha_2}{\alpha_2}}
\right)^{-\frac{\alpha_2}{1-\alpha_2}}
x^{\frac{1}{1-\alpha_2}},
& x\ge0,\\[1.2ex]
+\infty,&x<0.
\end{cases}
\]
Since
$
a_i=u_iw_i^{\alpha_2},
\gamma_i=\frac{u_i^{\frac{1}{1-\alpha_2}}}{S},
$
we obtain
$
\sum_{i=1}^d
a_i^{\frac{1}{\alpha_2}}
\gamma_i^{-\frac{1-\alpha_2}{\alpha_2}}=
S^{\frac{1-\alpha_2}{\alpha_2}}.
$
Here we also know that
$$\gamma_{i,N}=\frac{\beta_{N_i}^{\frac{1}{1-\alpha_2}}}{\sum_{j=1}^d \beta_{N_j}^{\frac{1}{1-\alpha_2}}} \longrightarrow \gamma_i=\frac{u_i^{\frac{1}{1-\alpha_2}}}{\sum_{j=1}^d u_j^{\frac{1}{1-\alpha_2}}},
$$ and 
$$a_{i,N}:=\frac{\beta_{N_i} N_i^{\alpha_2}}{\beta_N N^{\alpha_2}}\ \longrightarrow\ a_i=u_iw_i^{\alpha_2}.$$
Therefore
\[
I_{T^{(0)}}(x)
=
\begin{cases}
\displaystyle
\frac{c_{\alpha_2}}{S}
x^{\frac{1}{1-\alpha_2}},&x\ge0,\\[1ex]
+\infty,&x<0.
\end{cases}
\]
\end{proof}

\begin{proposition}[MDP for the random index $\xi(N)/N^{r}$]
\label{prop:MDP-xi}
Let $\xi(N):=\sum_{i=1}^d K_{i,N_i}(\alpha_2,\theta)$, where $K_{1,N_1},\ldots,K_{d,N_d}$ are independent and
$
\frac{N_i}{N}\longrightarrow w_i\in(0,1)$,
$\sum_{i=1}^d w_i=1.
$
Fix $r\in(\alpha_2,1)$ and define
\[
W_N:=\frac{\xi(N)}{N^r},
\qquad
s_N:=N^{\frac{r-\alpha_2}{1-\alpha_2}}.
\]
Then $\{W_N\}_{N\ge1}$ satisfies an LDP on $[0,\infty)$ with speed
$s_N$ and good rate function
\[
I_W(x)=
\begin{cases}
\displaystyle
(1-\alpha_2)
\alpha_2^{\frac{\alpha_2}{1-\alpha_2}}x^{\frac{1}{1-\alpha_2}},&x\ge0,\\[1ex]
+\infty,&x<0.
\end{cases}
\]
Equivalently, this is a moderate deviation principle for
$\xi(N)$ at level $N^r$.
\end{proposition}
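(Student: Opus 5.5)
The plan is to obtain the result as a special case of Corollary~\ref{cor:MDP-newZ}. We choose the moderate deviation sequence so that the normalization $\beta_N N^{\alpha_2}$ there coincides with $N^r$, and then convert the speed $v_N$ into $s_N$.

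\textbf{Choice of the sequence.} Set $\beta_n:=n^{r-\alpha_2}$, so that $\beta_N N^{\alpha_2}=N^r$ and $T_N^{(0)}=W_N$ exactly. We check \eqref{beta_n_alpha2}:
\begin{itemize}
\item $\beta_n/n^{1-\alpha_2}=n^{r-1}\to0$ because $r<1$;
\item $\beta_n/(\ln n)^{1-\alpha_2}=n^{r-\alpha_2}/(\ln n)^{1-\alpha_2}\to\infty$ because $r>\alpha_2$.
\end{itemize}
Hence Lemma~\ref{lem:FFG1} applies to each $K_{i,N_i}(\alpha_2,\theta)$, using $N_i\to\infty$, which follows from $w_i>0$.

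\textbf{Hypotheses of Theorem~\ref{thm:sum} and Corollary~\ref{cor:MDP-newZ}.} Since $N_i/N\to w_i$, we get
\[
u_i=\lim_{N\to\infty}\frac{\beta_{N_i}}{\beta_N}=w_i^{\,r-\alpha_2}>0 .
\]
The remaining limits are then as follows:
\begin{itemize}
\item $a_{i,N}=\beta_{N_i}N_i^{\alpha_2}/(\beta_NN^{\alpha_2})=(N_i/N)^r\to w_i^r=u_iw_i^{\alpha_2}>0$;
\item with $q:=\frac{r-\alpha_2}{1-\alpha_2}\in(0,1)$, we have $s_{i,N}=N_i^{q}$, so $\gamma_{i,N}\to w_i^{q}/\sum_j w_j^{q}\in(0,1)$ when $d\ge2$.
\end{itemize}
When $d=1$ the statement is just Lemma~\ref{lem:FFG1} with $\beta_n=n^{r-\alpha_2}$, so that case needs no separate argument. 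Independence is given, so the Corollary yields an LDP for $W_N$ with speed $v_N=\sum_i N_i^{q}$ and rate function $\frac{c_{\alpha_2}}{S}x^{1/(1-\alpha_2)}$ on $x\ge0$, where $S=\sum_j u_j^{1/(1-\alpha_2)}=\sum_j w_j^{q}$.

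\textbf{Change of speed.} We have $v_N/s_N=\sum_i (N_i/N)^{q}\to S\in(0,\infty)$. An LDP with speed $v_N$ and rate $J$ is therefore equivalent to an LDP with speed $s_N$ and rate $SJ$. This follows directly from the definitions, since $\frac{1}{s_N}\log P=\frac{v_N}{s_N}\cdot\frac{1}{v_N}\log P$. It gives
\[
I_W(x)=S\cdot\frac{c_{\alpha_2}}{S}\,x^{\frac{1}{1-\alpha_2}}=c_{\alpha_2}\,x^{\frac{1}{1-\alpha_2}},
\]
which is independent of the weights $w_i$, as claimed. Goodness is clear because the level sets are compact intervals $[0,(t/c_{\alpha_2})^{1-\alpha_2}]$. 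Since $W_N\ge0$ and $I_W=+\infty$ on $(-\infty,0)$, the LDP on $\mathbb R$ restricts to one on the closed set $[0,\infty)$.

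\textbf{Main obstacle.} The substantive work, namely the product LDP and the H\"older optimization, is already contained in Theorem~\ref{thm:sum}. The only delicate points are bookkeeping: verifying that the particular $\beta_n$ satisfies both growth conditions of \eqref{beta_n_alpha2}, which is exactly where $\alpha_2<r<1$ is used, and tracking the constant $S$ through the speed change so that it cancels.
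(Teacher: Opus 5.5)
Your argument is correct, but it is not the route the paper takes. The paper does not use Theorem~\ref{thm:sum} or Corollary~\ref{cor:MDP-newZ} here; it goes through G\"artner--Ellis instead.

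The paper's proof has three steps:
\begin{itemize}
\item It takes the log-mgf limit from the proof of Theorem~1 of \cite{FavaroFengGao2018}. By a subsequence argument it extends this limit to arbitrary deterministic pairs $(m_j,v_j)$ with $v_j/m_j\to0$ and $v_j/\log m_j\to\infty$.
\item It applies this to each $K_{i,N_i}$ with local speed $v_{i,N}=(N_i/N)s_N$. This choice makes the tilt $(v_{i,N}/N_i)^{\alpha_2}=s_N/N^r$ the same for every group.
\item Independence then gives $\frac{1}{s_N}\log\mathbb{E}e^{\lambda s_NW_N}\to\sum_i w_i(\lambda_+)^{1/\alpha_2}=(\lambda_+)^{1/\alpha_2}$, whose Legendre transform is $I_W$.
\end{itemize}
On that route the weights vanish immediately through $\sum_i w_i=1$. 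No product LDP, H\"older optimization or speed change is needed. Moreover, the log-mgf computation and the deterministic-pair extension are exactly what the paper reuses in Lemmas~\ref{lem:uniform-small-tilt} and~\ref{lem:two-layer-tail-control}.

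Your route treats the LDP statement of Lemma~\ref{lem:FFG1} as a black box, with the admissible choice $\beta_n=n^{r-\alpha_2}$ making $T_N^{(0)}=W_N$ exactly. The cost is tracking $S=\sum_j w_j^{q}$. Since $w_j^q>w_j$ for $q\in(0,1)$, this constant is strictly larger than $1$ whenever $d\ge2$. Hence the Corollary's speed $v_N\sim S s_N$ really differs from $s_N$, and the factor $1/S$ in its rate must be undone by your speed change.

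That speed-change step is worth keeping explicit. The proof of Theorem~\ref{thm:two-layer-K} cites Corollary~\ref{cor:MDP-newZ} for an MDP of $W_N$ at speed $s_N$ with rate $c_{\alpha_2}y^{1/(1-\alpha_2)}$, and your observation is what reconciles that citation with the Corollary as stated. Your separate $d=1$ case is harmless but unnecessary, since $w_i\in(0,1)$ with $\sum_i w_i=1$ already forces $d\ge2$.
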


\begin{proof}
We use the single-layer log-mgf asymptotic obtained in
the proof of Theorem~1 of \cite{FavaroFengGao2018}.
For $\alpha\in(0,1)$ and $\theta>-\alpha$, if a positive
sequence $(\beta_n)$ satisfies
\[
\frac{\beta_n}{n^{1-\alpha}}\to0,
\qquad
\frac{\beta_n}{(\log n)^{1-\alpha}}\to\infty,
\]
then, for every fixed $\lambda\in\mathbb{R}$,
\[
\frac{1}{\beta_n^{\frac{1}{1-\alpha}}}
\log\mathbb{E}
\exp\left\{
\lambda n^{-\alpha}
\beta_n^{\frac{\alpha}{1-\alpha}}
K_n(\alpha,\theta)
\right\}
\longrightarrow
\Psi_\alpha(\lambda),
\]
where
\[
\Psi_\alpha(\lambda)=
\begin{cases}
0,&\lambda\leq0,\\
\lambda^{\frac{1}{\alpha}},&\lambda>0.
\end{cases}
\]
Equivalently, setting $\beta_n=v_n^{1-\alpha}$ gives
\begin{equation}
\label{single-layer small-tilt}
\frac{1}{v_n}
\log\mathbb{E}
\exp\left\{
\lambda\left(\frac{v_n}{n}\right)^\alpha
K_n(\alpha,\theta)
\right\}
\longrightarrow
\Psi_\alpha(\lambda),
\end{equation}
provided
\[
\frac{v_n}{n}\to0,
\qquad
\frac{v_n}{\log n}\to\infty.
\]

The same limit holds along deterministic pairs
$(m_j,v_j)$ satisfying
\[
m_j\to\infty,\qquad
\frac{v_j}{m_j}\to0,\qquad
\frac{v_j}{\log m_j}\to\infty.
\]
Indeed, if convergence failed, one could select a
subsequence along which the error is bounded away
from zero and the sample sizes $m_j$ are strictly
increasing. Define a full sequence to equal $v_j$
at these sample sizes and to equal
$\sqrt{m\log(m+1)}$ at all remaining integers $m$.
This sequence satisfies the conditions of
\eqref{single-layer small-tilt}, giving a contradiction.

We now apply this result to the first-level cluster
counts. Throughout the remainder of the proof,
all limits are taken as $N\to\infty$, with $i$ fixed,
and we write $N_i=N_i(N)$.
Set
\[
\kappa=\frac{r-\alpha_2}{1-\alpha_2},
\qquad
s_N=N^\kappa,
\qquad
t_N=\frac{s_N}{N^r}.
\]
Since $\alpha_2<r<1$, we have $0<\kappa<1$ and
\[
t_N
=
N^{-\frac{\alpha_2(1-r)}{1-\alpha_2}}
\longrightarrow0.
\]
For each $i=1,\ldots,d$, define the local speed
\[
v_{i,N}
:=
N_i t_N^{\frac{1}{\alpha_2}}
=
\frac{N_i}{N}s_N.
\]
The corresponding sample size and moderate deviation
scale in the single-layer result are
\[
n=N_i(N),
\qquad
B_{i,N}:=v_{i,N}^{1-\alpha_2}
=N_i^{1-\alpha_2}N^{r-1},
\]
respectively. By $N_i/N\to w_i>0$, we have $N_i\to\infty$,
$\log N_i\sim\log N$, and
\[
\frac{v_{i,N}}{s_N}\to w_i.
\]
Moreover,
\[
\frac{v_{i,N}}{N_i}
=
\frac{s_N}{N}
=
N^{\kappa-1}
\longrightarrow0,
\qquad
\frac{v_{i,N}}{\log N_i}
=
\frac{N_i}{N}\frac{N^\kappa}{\log N_i}
\longrightarrow\infty.
\]
Equivalently, the corresponding scales satisfy
\[
\frac{B_{i,N}}{N_i^{1-\alpha_2}}
=N^{r-1}\to0,
\qquad
\frac{B_{i,N}}{(\log N_i)^{1-\alpha_2}}
=
\left(
\frac{v_{i,N}}{\log N_i}
\right)^{1-\alpha_2}
\to\infty.
\]

Thus \eqref{single-layer small-tilt}, in its
deterministic-pair form, applies with
$\alpha=\alpha_2$ and $(m_j,v_j)=(N_i(N),v_{i,N})$.
It yields
\[
\frac{1}{v_{i,N}}
\log\mathbb{E}
\exp\left\{
\lambda
\left(\frac{v_{i,N}}{N_i}\right)^{\alpha_2}
K_{i,N_i}(\alpha_2,\theta)
\right\}
\longrightarrow
\Psi_{\alpha_2}(\lambda).
\]
Since
\[
\left(\frac{v_{i,N}}{N_i}\right)^{\alpha_2}
=t_N=\frac{s_N}{N^r},
\qquad
\frac{v_{i,N}}{s_N}\to w_i,
\]
we obtain
\[
\frac{1}{s_N}
\log\mathbb{E}
\exp\left\{
\lambda\frac{s_N}{N^r}
K_{i,N_i}(\alpha_2,\theta)
\right\}
\longrightarrow
w_i\Psi_{\alpha_2}(\lambda).
\]

Finally, by independence of
$K_{1,N_1},\ldots,K_{d,N_d}$ and the identity
$\xi(N)=\sum_{i=1}^dK_{i,N_i}(\alpha_2,\theta)$,
\[
\begin{aligned}
\frac{1}{s_N}
\log\mathbb{E}e^{\lambda s_NW_N}
&=
\sum_{i=1}^d
\frac{1}{s_N}
\log\mathbb{E}
\exp\left\{
\lambda\frac{s_N}{N^r}
K_{i,N_i}(\alpha_2,\theta)
\right\}\\
&\longrightarrow
\sum_{i=1}^d w_i\Psi_{\alpha_2}(\lambda)
=
\Psi_{\alpha_2}(\lambda).
\end{aligned}
\]
The limiting log-mgf is finite and continuously
differentiable on $\mathbb{R}$. Hence the
G\"artner--Ellis theorem yields the stated MDP
with speed $s_N$ and good rate function
\[
I_W(x)
=
\sup_{\lambda\in\mathbb{R}}
\{\lambda x-\Psi_{\alpha_2}(\lambda)\}
=
\begin{cases}
(1-\alpha_2)
\alpha_2^{\frac{\alpha_2}{1-\alpha_2}}x^{\frac{1}{1-\alpha_2}},&x\geq0,\\[1mm]
+\infty,&x<0.
\end{cases}
\]
\end{proof}

%%%%%%%%%%%%%%%%%%%%%%%%%%%%%%%%%%%%%

\section{Two-layer moderate deviations for the hierarchical model}

\begin{lemma}[Uniform small-tilt asymptotics]
\label{lem:uniform-small-tilt}
Fix $a\in(0,1)$ and $\theta>-a$. Let $K_m$ and $M_{l,m}$
denote, respectively, the number of blocks and the number of
blocks of size $l$ in the restriction to $[m]$ of a
$\mathrm{PD}(a,\theta)$ exchangeable random partition, where
$l\geq1$ is fixed. Set $K_0=M_{l,0}=0$.

Let $(A_N)$ and $(s_N)$ be positive deterministic sequences
satisfying
\begin{equation}
\label{eq:small-tilt-scales}
A_N\to\infty,\qquad
s_N\to\infty,\qquad
\frac{s_N}{A_N}\to0,\qquad
\frac{s_N}{\log A_N}\to\infty.
\end{equation}
For $\lambda\in\mathbb{R}$, define
\[
t_N(\lambda)
=
\lambda\left(\frac{s_N}{A_N}\right)^a,
\qquad
\lambda_+=\max\{\lambda,0\},
\]
and
\[
p_a(l)
=
\frac{a}{l!}\prod_{j=1}^{l-1}(j-a),
\]
where the empty product is interpreted as one.

For $X_m=K_m$ or $X_m=M_{l,m}$, write
\[
\Psi_X(\lambda)
=
\begin{cases}
(\lambda_+)^{1/a},
    & X_m=K_m,\\[1mm]
\bigl(p_a(l)\lambda_+\bigr)^{1/a},
    & X_m=M_{l,m}.
\end{cases}
\]
Then, for every fixed $\lambda\in\mathbb{R}$ and $M>0$,
\begin{equation}
\label{eq:uniform-small-tilt}
\lim_{N\to\infty}
\sup_{0\leq y\leq M}
\left|
\frac{1}{s_N}
\log\mathbb{E}
\exp\left\{
t_N(\lambda)X_{\lfloor A_Ny\rfloor}
\right\}
-
y\Psi_X(\lambda)
\right|
=0.
\end{equation}
\end{lemma}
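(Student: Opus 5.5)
The plan is to deduce the uniform statement \eqref{eq:uniform-small-tilt} from pointwise convergence along deterministic pairs, combined with monotonicity in $y$ and continuity of the limit.

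\textbf{Pointwise limit.} Fix $y\in(0,M]$ and set $m_N=\lfloor A_Ny\rfloor$ and $v_N=ys_N$. Then $m_N\to\infty$, $v_N/m_N\sim s_N/A_N\to0$ and $v_N/\log m_N\sim ys_N/\log A_N\to\infty$. The tilt can be rewritten as
\[
t_N(\lambda)=\lambda\Bigl(\frac{s_N}{A_N}\Bigr)^a=\lambda_N'\Bigl(\frac{v_N}{m_N}\Bigr)^a,
\qquad
\lambda_N'=\lambda\Bigl(\frac{m_N}{yA_N}\Bigr)^a\to\lambda .
\]
For $K_m$ we apply the deterministic-pair form of \eqref{single-layer small-tilt} established in the proof of Proposition~\ref{prop:MDP-xi}. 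This gives $v_N^{-1}\log\mathbb{E}\exp\{\lambda'(v_N/m_N)^aK_{m_N}\}\to\Psi_a(\lambda')$ for each fixed $\lambda'$. To handle the drift $\lambda_N'\to\lambda$, sandwich by monotonicity in the tilt: for $\lambda>0$ use $\lambda(1\pm\epsilon)$, and note $\Psi_a$ is continuous. Dividing by $s_N$ instead of $v_N$ multiplies the limit by $y$. For $M_{l,m}$ the same argument uses the small-tilt log-mgf from the proof of Theorem~2 of \cite{FavaroFengGao2018} (underlying Lemma~\ref{lem:FFG2}), with limit $(p_a(l)\lambda_+)^{1/a}$. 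One checks that its Legendre transform reproduces \eqref{M rate}; this is the identification of the constant $p_a(l)$. The deterministic-pair extension is proved by the same subsequence argument. At $y=0$, or for $y<1/A_N$, we have $X=0$ and the limit is trivially $0$.

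\textbf{Upgrade to uniformity.} For $\lambda\ge0$ and $X_m=K_m$, the map $m\mapsto K_m$ is nondecreasing pathwise (blocks only appear), so
\[
F_N(y):=s_N^{-1}\log\mathbb{E}\exp\{t_N(\lambda)K_{\lfloor A_Ny\rfloor}\}
\]
is nondecreasing in $y$. Its pointwise limit $y\Psi_X(\lambda)$ is continuous, so a Pólya/Dini-type argument on a finite grid $0=y_0<\dots<y_k=M$ gives uniform convergence. For $\lambda\le0$ we have $0\ge F_N(y)\ge F_N(M)\to0$, which is uniform as well.

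\textbf{Main obstacle.} For $M_{l,m}$ there is no monotonicity, since blocks of size $l$ are destroyed when they grow. Here I would use a coupling bound. Going from $m$ to $m'>m$ changes $M_{l,\cdot}$ by at most $m'-m$. A cruder bound is also available: $|M_{l,m'}-M_{l,m}|\le (K_{m'}-K_m)+\#\{\text{blocks touched}\}$, and the number of blocks touched is at most the number of new arrivals. Between grid points $y_j$ and $y_{j+1}$ with mesh $\delta$, this gives
\[
|F_N(y)-F_N(y_j)|\le s_N^{-1}\log\mathbb{E}\,e^{|t_N|D}
\]
by Hölder. Here $D$ is a sum of about $A_N\delta$ arrival contributions, so the trivial bound $D\le A_N\delta+1$ yields only $|t_N|A_N\delta/s_N=|\lambda|\delta\,(A_N/s_N)^{1-a}$, which is not small. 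Instead I would bound $D$ by twice the number of blocks that exist at time $m'$ and receive arrivals in $(m,m']$, plus the new blocks. Its exponential moment at tilt $t_N$ is controlled by the $K$-type estimate on a window of length $A_N\delta$, which gives an error of order $C\delta\,\Psi(C\lambda)$ after an application of Hölder. Then letting $N\to\infty$ and afterwards $\delta\to0$ finishes the proof. Making this window estimate rigorous, via the conditional exchangeable (Chinese restaurant) increments given the state at time $m$, is where I expect the real work.
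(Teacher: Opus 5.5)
Your argument for $X_m=K_m$ is sound. The pointwise limit comes from the deterministic-pair form, with the drift $\lambda_N'\to\lambda$ handled by monotonicity in the tilt. Uniformity then follows from P\'olya's theorem for $\lambda\ge0$ and from $0\ge F_N(y)\ge F_N(M)\to0$ for $\lambda\le0$.

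The gap is the case $X_m=M_{l,m}$ with $\lambda>0$. You leave the window estimate explicitly unproved. Moreover, the inequality $|F_N(y)-F_N(y_j)|\le s_N^{-1}\log\mathbb{E}e^{|t_N|D}$ is not what H\"older gives, because $D$ is not independent of $M_{l,\lfloor A_Ny_j\rfloor}$. H\"older only yields $F_N(y;\lambda)\le p^{-1}F_N(y_j;p\lambda)+q^{-1}s_N^{-1}\log\mathbb{E}e^{q t_N(\lambda)D}$, plus a symmetric lower bound at tilt $\lambda/p$. So the grid limits must be taken at perturbed tilts, with $p\downarrow1$ sent last. As written, uniformity for $M_{l,m}$ is therefore not established. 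For $\lambda\le0$ and $X=M_{l,m}$ you also need the domination $0\le M_{l,m}\le K_m\le K_{\lfloor A_NM\rfloor}$, which you do not state.

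The missing idea is that monotonicity in $y$ is not needed at all, because your pointwise argument never uses that $y$ is fixed. Take any deterministic $y_N\in[0,M]$ with $y_N\to y>0$, and put $m_N=\lfloor A_Ny_N\rfloor$ and $w_N=(m_N/A_N)s_N$. Then $(m_N,w_N)$ is still an admissible pair, and $t_N(\lambda)=\lambda(w_N/m_N)^a$ holds exactly, so there is not even a drift. Hence $F_N^X(y_N)\to y\Psi_X(\lambda)$ directly. For $y_N\to0$ and $\lambda>0$, the bound $M_{l,m}\le K_m$ and monotonicity of $K$ give $0\le F_N^X(y_N)\le F_N^K(\delta)\to\delta\lambda^{1/a}$ for every $\delta>0$. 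This convergence along all moving sequences, together with compactness of $[0,M]$ and continuity of $y\mapsto y\Psi_X(\lambda)$, gives uniform convergence by a subsequence argument. That is exactly the paper's proof, and it treats $K_m$ and $M_{l,m}$ identically.

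If you prefer to keep your grid route, it closes by exchangeability rather than Chinese-restaurant conditioning. For $m<m'$, $|M_{l,m'}-M_{l,m}|$ is at most the number of blocks of the partition restricted to $\{m+1,\dots,m'\}$, and this count has the law of $K_{m'-m}$. The H\"older error term is then at most $q^{-1}\delta(q\lambda)^{1/a}$ in the limit, and you let $N\to\infty$, then $\delta\to0$, then $p\downarrow1$, in that order.
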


\begin{proof}
We use the single-layer log-mgf limits from the proofs
of Theorems~1 and~2 of \cite{FavaroFengGao2018},
together with the deterministic-pair extension
established above. Specifically, for every fixed
$\eta\in\mathbb{R}$,
\begin{equation}
\label{eq:single-layer-small-tilt}
\frac{1}{w_j}
\log\mathbb{E}
\exp\left\{
\eta\left(\frac{w_j}{m_j}\right)^aX_{m_j}
\right\}
\longrightarrow
\Psi_X(\eta)
\end{equation}
whenever
\begin{equation}
\label{eq:admissible-pairs}
m_j\to\infty,\qquad
\frac{w_j}{\log m_j}\to\infty,\qquad
\frac{w_j}{m_j}\to0.
\end{equation}

Fix $\lambda\in\mathbb{R}$ and $M>0$, and define
\[
F_N^X(y)
=
\frac{1}{s_N}
\log\mathbb{E}
\exp\left\{
t_N(\lambda)X_{\lfloor A_Ny\rfloor}
\right\},
\qquad 0\leq y\leq M.
\]
We first establish convergence along any deterministic
sequence $y_N\in[0,M]$ satisfying $y_N\to y>0$.
Let
\[
m_N=\lfloor A_Ny_N\rfloor,
\qquad
w_N=\frac{m_N}{A_N}s_N.
\]
Then
\[
\frac{m_N}{A_N}\to y,
\qquad
\frac{w_N}{s_N}\to y,
\qquad
\frac{w_N}{m_N}
=
\frac{s_N}{A_N}\to0.
\]
Moreover, since $m_N/A_N\to y>0$, we have
$\log m_N/\log A_N\to1$. Consequently,
\eqref{eq:small-tilt-scales} implies
\[
\frac{w_N}{\log m_N}\to\infty.
\]
Thus $(m_N,w_N)$ satisfies \eqref{eq:admissible-pairs}.
Using the exact identity
\[
t_N(\lambda)
=
\lambda\left(\frac{w_N}{m_N}\right)^a,
\]
we obtain from \eqref{eq:single-layer-small-tilt} that
\begin{equation}
\label{eq:moving-positive-index}
\begin{aligned}
F_N^X(y_N)
&=
\frac{w_N}{s_N}
\left[
\frac{1}{w_N}
\log\mathbb{E}
\exp\left\{
\lambda\left(\frac{w_N}{m_N}\right)^aX_{m_N}
\right\}
\right]\\
&\longrightarrow y\Psi_X(\lambda).
\end{aligned}
\end{equation}

Suppose first that $\lambda\leq0$. Under the common partition
coupling,
\[
0
\leq X_{\lfloor A_Ny\rfloor}
\leq K_{\lfloor A_Ny\rfloor}
\leq K_{\lfloor A_NM\rfloor},
\qquad 0\leq y\leq M.
\]
Since $t_N(\lambda)\leq0$, it follows that
\[
F_N^K(M)\leq F_N^X(y)\leq0,
\qquad 0\leq y\leq M.
\]
By \eqref{eq:moving-positive-index},
\[
F_N^K(M)\longrightarrow M\Psi_K(\lambda)=0.
\]
Since also $\Psi_X(\lambda)=0$, this proves
\eqref{eq:uniform-small-tilt} for $\lambda\leq0$.

Now suppose that $\lambda>0$, and let $y_N\in[0,M]$
satisfy $y_N\to0$. Fix $\delta\in(0,M]$. For all sufficiently
large $N$, we have $y_N\leq\delta$, and hence
\[
0\leq F_N^X(y_N)\leq F_N^K(\delta).
\]
Applying \eqref{eq:moving-positive-index} to the constant
sequence $y_N=\delta$ yields
\[
0
\leq \liminf_{N\to\infty}F_N^X(y_N)
\leq \limsup_{N\to\infty}F_N^X(y_N)
\leq \delta\lambda^{1/a}.
\]
Letting $\delta\downarrow0$, we conclude that
\[
F_N^X(y_N)\longrightarrow0.
\]

Together with \eqref{eq:moving-positive-index}, this shows
that, whenever $y_N\in[0,M]$ and $y_N\to y\in[0,M]$,
\begin{equation}
\label{eq:sequential-small-tilt}
F_N^X(y_N)\longrightarrow y\Psi_X(\lambda).
\end{equation}
It remains to deduce uniform convergence. If
\eqref{eq:uniform-small-tilt} were false, there would exist
$\varepsilon>0$, a subsequence $(N_j)$, and points
$y_j\in[0,M]$ such that
\[
\left|
F_{N_j}^X(y_j)-y_j\Psi_X(\lambda)
\right|
\geq\varepsilon.
\]
By compactness, a further subsequence satisfies
$y_j\to y\in[0,M]$. Applying
\eqref{eq:sequential-small-tilt} along this subsequence and
using the continuity of $y\mapsto y\Psi_X(\lambda)$, we obtain
\[
F_{N_j}^X(y_j)-y_j\Psi_X(\lambda)\longrightarrow0,
\]
a contradiction. This completes the proof.
\end{proof}

%%%%%%%%%%%%%%%%%%%%%%%%%%%%%%%%%%%%%

\begin{lemma}[Exponential tail control for the two-layer
conditional expectation]
\label{lem:two-layer-tail-control}
Fix $\alpha_1,\alpha_2\in(0,1)$, $\theta_0>-\alpha_1$,
and $\theta>-\alpha_2$. Let
$K_{i,m}(\alpha_2,\theta)$, $i=1,\ldots,d$, be the
block-count processes of independent
$\mathrm{PD}(\alpha_2,\theta)$ partitions, where $d$ is fixed.
Suppose that the deterministic sample sizes satisfy
\[
\sum_{i=1}^d N_i=N,
\qquad
\frac{N_i}{N}\to w_i>0,
\qquad
\sum_{i=1}^d w_i=1,
\]
and define
\[
\xi(N)
=
\sum_{i=1}^d K_{i,N_i}(\alpha_2,\theta).
\]
Independently of these partitions, let
$X_m=K_m(\alpha_1,\theta_0)$ or
$X_m=M_{l,m}(\alpha_1,\theta_0)$, where $l\geq1$ is fixed.

Let $r\in(\alpha_2,1)$ and
\[
s_N\sim S N^\kappa,
\qquad
\kappa=\frac{r-\alpha_2}{1-\alpha_2},
\qquad S>0.
\]
Set
\[
W_N=\frac{\xi(N)}{N^r},
\qquad
b_N=N^{r\alpha_1}s_N^{1-\alpha_1},
\]
and
\[
t_N(\lambda)
=
\frac{\lambda s_N}{b_N}
=
\lambda\left(\frac{s_N}{N^r}\right)^{\alpha_1}.
\]
For $\lambda\in\mathbb{R}$, define
\[
H_N^X(\lambda)
=
\mathbb{E}\left[
e^{t_N(\lambda)X_{\xi(N)}}\mid\xi(N)
\right].
\]
Then, for every fixed $\lambda\in\mathbb{R}$,
\begin{equation}
\label{eq:two-layer-tail-control}
\lim_{M\to\infty}\limsup_{N\to\infty}
\frac{1}{s_N}
\log\mathbb{E}\left[
H_N^X(\lambda)\mathbf{1}_{\{W_N>M\}}
\right]
=-\infty.
\end{equation}
Equivalently, the same limit holds with
$H_N^X(\lambda)$ replaced by
$e^{t_N(\lambda)X_{\xi(N)}}$.

More precisely, put
\[
c_\lambda=2(\lambda_+)^{1/\alpha_1},
\qquad
\lambda_+=\max\{\lambda,0\}.
\]
For every $h>c_\lambda$ and $M\geq1$,
\begin{equation}
\label{eq:two-layer-tail-quantitative}
\limsup_{N\to\infty}
\frac{1}{s_N}
\log\mathbb{E}\left[
H_N^X(\lambda)\mathbf{1}_{\{W_N>M\}}
\right]
\leq
S^{(1-\alpha_2)/\alpha_2}h^{1/\alpha_2}
-
(h-c_\lambda)M.
\end{equation}
Here $\log0=-\infty$.
\end{lemma}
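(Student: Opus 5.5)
The plan is to bound the conditional moment generating function $H_N^X(\lambda)$ by a deterministic function that is linear in $W_N$. Once that is done, \eqref{eq:two-layer-tail-quantitative} reduces to a Chernoff bound for $W_N$, and Proposition~\ref{prop:MDP-xi} controls the exponential moments of $W_N$.

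\emph{Case $\lambda\le0$.} Here $t_N(\lambda)\le0$ and $X_m\ge0$, so $H_N^X(\lambda)\le1$ and $c_\lambda=0$. The claim is then the ordinary tail estimate for $W_N$, which is handled by the Chernoff step below.

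\emph{Case $\lambda>0$: the key estimate.} Since $X_m\le K_m(\alpha_1,\theta_0)$ under the common partition coupling, it suffices to treat $X=K$. I will prove that for every $\varepsilon>0$ and all large $N$,
\begin{equation*}
\sup_{m\ge0}\Bigl\{\frac{1}{s_N}\log\mathbb{E}\,e^{t_N(\lambda)K_m}-c_\lambda\frac{m}{N^r}\Bigr\}\le\varepsilon .
\end{equation*}
Substituting $m=\xi(N)$ then gives $H_N^X(\lambda)\le\exp\{s_N(c_\lambda W_N+\varepsilon)\}$. I split the range of $m$ into two parts.

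\emph{Small $m$: $m\le y_0N^r$ with $y_0$ fixed.} Apply Lemma~\ref{lem:uniform-small-tilt} with $A_N=N^r$ and $a=\alpha_1$; the scale conditions hold because $s_N/N^r\to0$ and $s_N\gg\log N$. This gives $\frac1{s_N}\log\mathbb{E}e^{t_NK_m}\le (m/N^r)\lambda^{1/\alpha_1}+\varepsilon$ uniformly. Indeed, the uniform convergence on $[0,y_0]$ holds with limit $y\lambda^{1/\alpha_1}$, which is below $c_\lambda y$.

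\emph{Large $m$: $m>y_0N^r$.} Compactness is no longer available, so I need a bound that is genuinely uniform in $m$. I would use the Pitman sequential construction. Conditionally on the past, a new block appears at step $k+1$ with probability $(\theta_0+\alpha_1K_k)/(\theta_0+k)$. A supermartingale (or exponential-martingale) comparison then yields
\begin{equation*}
\log\mathbb{E}\,e^{tK_m}\le C(t)+m\,\phi(t),
\end{equation*}
where $\phi(t)\sim t^{1/\alpha_1}$ as $t\downarrow0$. This is the scaling exhibited by the explicit generating function of $K_m$, which behaves like a Mittag-Leffler Laplace transform $\exp\{m(\cdot)t^{1/\alpha_1}\}$. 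For $t=t_N(\lambda)$ we get
\begin{equation*}
m\,\phi(t_N)\approx m\,\lambda^{1/\alpha_1}\frac{s_N}{N^r},
\end{equation*}
and the factor $2$ in $c_\lambda$ absorbs the $(1+o(1))$ errors uniformly in $m$. The additive term $C(t_N)$ is $o(s_N)$. This uniform-in-$m$ exponential bound is the step I expect to be the main obstacle. If the martingale comparison proves awkward, my fallback is to use the monotonicity of $m\mapsto K_m$ together with a dyadic decomposition, iterating the compact-range estimate with a geometrically summable loss.

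\emph{Chernoff step.} With the key estimate in hand, for $h>c_\lambda$ and $M\ge1$,
\begin{equation*}
\mathbb{E}\bigl[H_N^X\mathbf{1}_{\{W_N>M\}}\bigr]\le e^{\varepsilon s_N}\,\mathbb{E}\bigl[e^{hs_NW_N}e^{-(h-c_\lambda)s_NW_N}\mathbf{1}_{\{W_N>M\}}\bigr]\le e^{\varepsilon s_N-(h-c_\lambda)Ms_N}\,\mathbb{E}\,e^{hs_NW_N}.
\end{equation*}
Proposition~\ref{prop:MDP-xi} gives the limit $\frac1{N^\kappa}\log\mathbb{E}e^{\mu N^\kappa W_N}\to\mu^{1/\alpha_2}$. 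Taking $\mu=hS$ and using $s_N\sim SN^\kappa$,
\begin{equation*}
\frac1{s_N}\log\mathbb{E}\,e^{hs_NW_N}\to\frac{(hS)^{1/\alpha_2}}{S}=S^{(1-\alpha_2)/\alpha_2}h^{1/\alpha_2}.
\end{equation*}
Letting $\varepsilon\downarrow0$ yields \eqref{eq:two-layer-tail-quantitative}. Sending $M\to\infty$ with $h$ fixed then gives \eqref{eq:two-layer-tail-control}.

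\emph{Equivalence with the unconditioned form.} The event $\{W_N>M\}$ is $\xi(N)$-measurable, so
\begin{equation*}
\mathbb{E}\bigl[H_N^X\mathbf{1}_{\{W_N>M\}}\bigr]=\mathbb{E}\bigl[e^{t_NX_{\xi(N)}}\mathbf{1}_{\{W_N>M\}}\bigr].
\end{equation*}
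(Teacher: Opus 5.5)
Your reduction to $X=K$, the case $\lambda\le0$, the Chernoff step with the outer limit $\frac{1}{s_N}\log\mathbb{E}e^{hs_NW_N}\to S^{(1-\alpha_2)/\alpha_2}h^{1/\alpha_2}$, and the tower-property equivalence all match the paper and are fine. The gap is the step that carries the content of the lemma: a bound $\log\mathbb{E}e^{t_N(\lambda)K_m}\le c_\lambda s_Nm/N^r+o(s_N)$ that holds uniformly over the unbounded range $m>y_0N^r$.

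You assert a supermartingale comparison $\log\mathbb{E}e^{tK_m}\le C(t)+m\phi(t)$ with $\phi(t)\sim t^{1/\alpha_1}$ and $C(t_N)=o(s_N)$, but you do not construct it. It also does not fall out of the sequential construction directly. The one-step identity $\mathbb{E}[e^{tK_{k+1}}\mid K_k]=e^{tK_k}\bigl(1+(e^t-1)\frac{\theta_0+\alpha_1K_k}{\theta_0+k}\bigr)$ depends on $K_k$, so the recursion for $\mathbb{E}e^{tK_k}$ brings in $\mathbb{E}[K_ke^{tK_k}]$ and does not close. The crude bound $K_k\le k$ only gives $\phi(t)\approx\alpha_1 t$. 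At $t=t_N(\lambda)$ this exceeds the required per-step rate $t_N^{1/\alpha_1}=\lambda^{1/\alpha_1}s_N/N^r$ by the diverging factor $\asymp(N^r/s_N)^{1-\alpha_1}$.

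Your dyadic fallback does not work either. Lemma~\ref{lem:uniform-small-tilt} gives convergence on $[0,M]$ only at an unquantified, $M$-dependent rate, so it cannot be chained over the $\asymp\log N$ dyadic scales up to $\xi(N)\le N$. Monotonicity of $m\mapsto K_m$ only bounds $\mathbb{E}e^{tK_m}$ by its values at larger $m$, which is the wrong direction.

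The missing observation is that no new estimate is needed. The single-layer asymptotic $\log\mathbb{E}e^{u_jK_{m_j}}/(m_ju_j^{1/\alpha_1})\to1$ holds along arbitrary pairs with $m_j\to\infty$, $0<u_j\to0$ and $m_ju_j^{1/\alpha_1}/\log m_j\to\infty$. This follows from the same embedding-into-a-full-sequence argument used in the proof of Proposition~\ref{prop:MDP-xi}. For $m\ge\lceil N^r\rceil$ these conditions hold uniformly in $m$: the tilt $t_N(\lambda)\to0$ does not depend on $m$, and since $x/\log x$ is increasing for $x\ge e$,
\[
\inf_{m\ge\lceil N^r\rceil}\frac{m\,t_N(\lambda)^{1/\alpha_1}}{\log m}\ge\frac{\lambda^{1/\alpha_1}s_N}{r\log N}\longrightarrow\infty .
\]
Now argue by contradiction along points $m_j\ge\lceil N_j^r\rceil$ where the relative error stays above some $\varepsilon>0$. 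This yields $\sup_{m\ge\lceil N^r\rceil}\bigl|\log\mathbb{E}e^{t_NK_m}/(mt_N^{1/\alpha_1})-1\bigr|\to0$. Hence $\log\mathbb{E}e^{t_NK_m}\le 2mt_N^{1/\alpha_1}=c_\lambda s_Nm/N^r$ for all such $m$ and all large $N$, with no additive term; this is where the factor $2$ in $c_\lambda$ comes from.

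Finally, since $M\ge1$, on $\{W_N>M\}$ one has $\xi(N)\ge\lceil N^r\rceil$. So your small-$m$ range and the $\varepsilon$-slack are not needed at all, and with this bound in place your Chernoff step finishes the proof exactly as in the paper.
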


\begin{proof}
We use the following sequential form of the single-layer
small-tilt asymptotic, obtained from the log-mgf calculation
in the proof of Theorem 1 of \cite{FavaroFengGao2018}:
for fixed $\alpha\in(0,1)$ and $\vartheta>-\alpha$,
\begin{equation}
\label{eq:tail-single-layer-input}
\frac{
\log\mathbb{E}e^{u_jK_{m_j}(\alpha,\vartheta)}
}{
m_j u_j^{1/\alpha}
}
\longrightarrow1
\end{equation}
whenever
\[
m_j\to\infty,
\qquad
u_j>0,
\qquad
u_j\to0,
\qquad
\frac{m_j u_j^{1/\alpha}}{\log m_j}\to\infty.
\]
For a full sequence indexed by $m$, this follows by taking
$\beta_m=(m u_m^{1/\alpha})^{1-\alpha}$ in the single-layer
log-mgf limit. The formulation for arbitrary pairs follows
by a subsequence argument: a failure would yield a
subsequence with strictly increasing $m_j$; assign the
corresponding speed
$v_{m_j}=m_j u_j^{1/\alpha}$ there and set
$v_m=\sqrt{m\log(m+1)}$ at the remaining integers.
This gives an admissible full sequence and a contradiction.

We first verify the outer exponential-moment limit.
For every fixed $h>0$, the tilt $h s_N/N^r$ tends to zero
and
\[
\frac{
N_i(hs_N/N^r)^{1/\alpha_2}
}{
\log N_i
}
\sim
w_i h^{1/\alpha_2}S^{1/\alpha_2}
\frac{N^\kappa}{\log N}
\longrightarrow\infty.
\]
By independence and \eqref{eq:tail-single-layer-input},
\begin{equation}
\label{eq:tail-outer-logmgf}
\begin{aligned}
\frac{1}{s_N}\log\mathbb{E}e^{h s_NW_N}
&=
\frac{1}{s_N}\sum_{i=1}^d
\log\mathbb{E}
\exp\left\{
\frac{h s_N}{N^r}K_{i,N_i}(\alpha_2,\theta)
\right\}\\
&=
(1+o(1))
\frac{N}{s_N}
\left(\frac{h s_N}{N^r}\right)^{1/\alpha_2}\\
&\longrightarrow
S^{(1-\alpha_2)/\alpha_2}h^{1/\alpha_2}
=:\Gamma(h).
\end{aligned}
\end{equation}

Next suppose that $\lambda>0$, and abbreviate
$K_m=K_m(\alpha_1,\theta_0)$ and
$t_N=t_N(\lambda)$. We claim that
\begin{equation}
\label{eq:tail-uniform-relative-mgf}
\sup_{m\geq\lceil N^r\rceil}
\left|
\frac{
\log\mathbb{E}e^{t_NK_m}
}{
m t_N^{1/\alpha_1}
}
-1
\right|
\longrightarrow0.
\end{equation}
Indeed, $t_N\to0$ and, since $m/\log m$ is increasing
for sufficiently large $m$,
\[
\inf_{m\geq\lceil N^r\rceil}
\frac{m t_N^{1/\alpha_1}}{\log m}
\geq
\frac{
\lambda^{1/\alpha_1}s_N
}{
r\log N
}
\longrightarrow\infty.
\]
If \eqref{eq:tail-uniform-relative-mgf} failed, there
would exist a subsequence $(N_j)$ and integers
$m_j\geq\lceil N_j^r\rceil$ along which the error stays
bounded away from zero. The pairs $(m_j,t_{N_j})$
satisfy all conditions of
\eqref{eq:tail-single-layer-input}, giving a contradiction.

Consequently, for all sufficiently large $N$ and all
$m\geq\lceil N^r\rceil$,
\begin{equation}
\label{eq:tail-inner-upper-bound}
\mathbb{E}e^{t_N(\lambda)X_m}
\leq
\mathbb{E}e^{t_N(\lambda)K_m}
\leq
\exp\left\{
c_\lambda s_N\frac{m}{N^r}
\right\}.
\end{equation}
The first inequality uses $0\leq M_{l,m}\leq K_m$
and $t_N(\lambda)>0$.

If $\lambda\leq0$, then $t_N(\lambda)\leq0$ and
$X_m\geq0$, so
\[
\mathbb{E}e^{t_N(\lambda)X_m}
\leq1
=
\exp\left\{
c_\lambda s_N\frac{m}{N^r}
\right\},
\]
where $c_\lambda=0$ in this case.
Thus, for every fixed $\lambda\in\mathbb{R}$,
all sufficiently large $N$, and all
$m\geq\lceil N^r\rceil$, we have
\[
\mathbb{E}e^{t_N(\lambda)X_m}
\leq
\exp\left\{
c_\lambda s_N\frac{m}{N^r}
\right\}.
\]

By independence of the inner partition and $\xi(N)$,
\[
H_N^X(\lambda)
=
\left.
\mathbb{E}e^{t_N(\lambda)X_m}
\right|_{m=\xi(N)}.
\]
Thus, for $M\geq1$ and all sufficiently large $N$,
\[
H_N^X(\lambda)\mathbf{1}_{\{W_N>M\}}
\leq
e^{c_\lambda s_NW_N}
\mathbf{1}_{\{W_N>M\}}.
\]
For any fixed $h>c_\lambda$, it follows that
\[
\begin{aligned}
\mathbb{E}\left[
H_N^X(\lambda)\mathbf{1}_{\{W_N>M\}}
\right]
&\leq
\mathbb{E}\left[
e^{c_\lambda s_NW_N}
\mathbf{1}_{\{W_N>M\}}
\right]\\
&\leq
e^{-(h-c_\lambda)s_NM}
\mathbb{E}e^{h s_NW_N}.
\end{aligned}
\]
Taking logarithms, dividing by $s_N$, and using
\eqref{eq:tail-outer-logmgf} proves
\eqref{eq:two-layer-tail-quantitative}.
Keeping $h>c_\lambda$ fixed and letting $M\to\infty$
yields \eqref{eq:two-layer-tail-control}.

Finally, the equivalent formulation follows from the tower
property, since $\{W_N>M\}$ is measurable with respect
to $\xi(N)$.
\end{proof}

%%%%%%%%%%%%%%%%%%%%%%%%%%%%%%%%%%%%%

\begin{theorem}[Two-layer MDP for $K_{\xi(N)}$]
\label{thm:two-layer-K}
Assume the conditions of Corollary~\ref{cor:MDP-newZ} and
Lemma~\ref{lem:uniform-small-tilt}. For a fixed $r\in(\alpha_2,1)$, let
\[
s_N=N^{\frac{r-\alpha_2}{1-\alpha_2}},
\qquad
b_N:=s_N\left(\frac{N^r}{s_N}\right)^{\alpha_1}.
\]
Then
$
\left\{
\frac{K_{\xi(N)}(\alpha_1,\theta_0)}{b_N}
\right\}_{N\ge1}
$
satisfies a moderate deviation principle (MDP) on
$\mathbb{R}$ with speed $s_N$ and good rate function
\[
I_K(x)=
\begin{cases}
(1-\alpha_1\alpha_2)
(\alpha_1\alpha_2)^{
\frac{\alpha_1\alpha_2}{1-\alpha_1\alpha_2}}
x^{\frac1{1-\alpha_1\alpha_2}},
&x\ge0,\\[1ex]
+\infty,&x<0.
\end{cases}
\]
\end{theorem}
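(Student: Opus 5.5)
We prove the statement with the G\"artner--Ellis theorem. The task is to show that, for every fixed $\lambda\in\mathbb{R}$,
\[
\Lambda_N(\lambda):=\frac{1}{s_N}\log\mathbb{E}\exp\Bigl\{\lambda\frac{s_N}{b_N}K_{\xi(N)}(\alpha_1,\theta_0)\Bigr\}
\longrightarrow \Lambda(\lambda):=(\lambda_+)^{\frac{1}{\alpha_1\alpha_2}},
\]
and then compute the Legendre transform. The tilt is $\lambda s_N/b_N=\lambda(s_N/N^r)^{\alpha_1}=t_N(\lambda)$, exactly as in Lemma~\ref{lem:two-layer-tail-control} with $S=1$. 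We condition on $\xi(N)$ and use the independence of the two layers. This gives
\[
\mathbb{E}e^{t_N(\lambda)K_{\xi(N)}}=\mathbb{E}\bigl[H_N^K(\lambda)\bigr],
\qquad H_N^K(\lambda)=\mathbb{E}e^{t_NK_m}\big|_{m=\xi(N)}=\mathbb{E}e^{t_NK_m}\big|_{m=N^rW_N}.
\]

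\textbf{Step 1 (inner layer).} We apply Lemma~\ref{lem:uniform-small-tilt} with $a=\alpha_1$, $A_N=N^r$ and the given $s_N=N^\kappa$. The scale conditions hold because $s_N/N^r=N^{\kappa-r}\to0$ (indeed $\kappa<r$, which is equivalent to $r<1$) and because $s_N/\log N^r\to\infty$. Since $\xi(N)=\lfloor A_NW_N\rfloor$ exactly, the lemma yields, uniformly on $\{W_N\le M\}$,
\[
\frac1{s_N}\log H_N^K(\lambda)=W_N(\lambda_+)^{1/\alpha_1}+o(1).
\]
For every $M$ we therefore have
\[
\mathbb{E}\bigl[H_N^K\mathbf 1_{\{W_N\le M\}}\bigr]=e^{o(s_N)}\,\mathbb{E}\bigl[e^{s_N g(W_N)}\mathbf 1_{\{W_N\le M\}}\bigr],
\qquad g(w)=w(\lambda_+)^{1/\alpha_1}.
\]

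\textbf{Step 2 (outer layer via Varadhan).} Proposition~\ref{prop:MDP-xi} gives an LDP for $W_N$ with speed $s_N$ and rate $I_W$. The function $g$ is continuous, so Varadhan's lemma on the compact truncation $[0,M]$ gives
\[
\lim_N\frac1{s_N}\log\mathbb{E}\bigl[e^{s_Ng(W_N)}\mathbf 1_{\{W_N\le M\}}\bigr]=\sup_{0\le w\le M}\{g(w)-I_W(w)\}.
\]
One must handle the closed and open sets separately for the indicator. Using $[0,M]$ closed for the upper bound and $[0,M)$ open for the lower bound suffices, since the supremum is continuous in $M$. Alternatively, the direct route is to observe that $\frac1{s_N}\log\mathbb{E}e^{\mu s_NW_N}\to\Psi_{\alpha_2}(\mu)$, as in the proof of Proposition~\ref{prop:MDP-xi}, and to sandwich. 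Lemma~\ref{lem:two-layer-tail-control} shows that the contribution of $\{W_N>M\}$ is negligible as $M\to\infty$. Hence
\[
\Lambda(\lambda)=\sup_{w\ge0}\{w\mu-I_W(w)\}=\Psi_{\alpha_2}(\mu)=\mu^{1/\alpha_2},
\qquad \mu=(\lambda_+)^{1/\alpha_1},
\]
because $I_W$ is the Legendre transform of the convex, essentially smooth function $\Psi_{\alpha_2}$. This gives $\Lambda(\lambda)=(\lambda_+)^{1/(\alpha_1\alpha_2)}$.

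\textbf{Step 3 (rate function).} $\Lambda$ is finite and $C^1$ on $\mathbb{R}$ because $1/(\alpha_1\alpha_2)>1$, so G\"artner--Ellis yields the MDP with good rate $\Lambda^*$. Write $p=1/(\alpha_1\alpha_2)$. For $x\ge0$ the optimizer is $\lambda=(x/p)^{1/(p-1)}$, which gives
\[
\Lambda^*(x)=(1-\alpha_1\alpha_2)(\alpha_1\alpha_2)^{\frac{\alpha_1\alpha_2}{1-\alpha_1\alpha_2}}x^{\frac1{1-\alpha_1\alpha_2}},
\]
and for $x<0$ letting $\lambda\to-\infty$ gives $+\infty$.

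The main obstacle is Step 2: passing the uniform $o(1)$ error and the truncation through the expectation rigorously. In particular the lower bound needs care near $w=0$ and when $\lambda\le0$. For $\lambda\le0$, however, $\Lambda_N\le0$ trivially, and a lower bound of $0$ follows from $K\le\xi(N)$ together with the fact that $\frac1{s_N}\log\mathbb{E}e^{t_N\xi(N)}\to0$, since $t_N s_N\ldots$ is negligible; alternatively the $\lambda\le0$ case of Lemma~\ref{lem:uniform-small-tilt} can be used on $\{W_N\le M\}$ with $\mathbb{P}(W_N\le M)\to1$.
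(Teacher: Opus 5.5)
Your proposal is correct and follows essentially the same route as the paper: you condition on $\xi(N)$, apply Lemma~\ref{lem:uniform-small-tilt} with $A_N=N^r$, $a=\alpha_1$ on $\{W_N\le M\}$, use the compact Laplace (Varadhan) bounds for $W_N$ together with Lemma~\ref{lem:two-layer-tail-control} for $\{W_N>M\}$, identify $\sup_{y\ge0}\{q_\lambda y-I_W(y)\}=q_\lambda^{1/\alpha_2}$, and conclude by G\"artner--Ellis. One small caveat: your first justification of the $\lambda\le0$ lower bound via $K_{\xi(N)}\le\xi(N)$ is incomplete, because $|t_N|\xi(N)$ is typically not $o(s_N)$ when $\kappa$ is small; this case is, however, already covered by the same compact Laplace lower bound with $q_\lambda=0$, or by your alternative argument using $\mathbb{P}(W_N\le M)\to1$.
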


\begin{proof}
For $a\in(0,1)$, write
\[
c_a=(1-a)a^{a/(1-a)},
\qquad
\rho=\alpha_1\alpha_2.
\]
By the outer-index result in
Corollary~\ref{cor:MDP-newZ},
$
W_N:=\frac{\xi(N)}{N^r}
$
satisfies an MDP with speed $s_N$ and good rate function
\[
I_W(y)=c_{\alpha_2}y^{1/(1-\alpha_2)},
\qquad y\geq0.
\]

Fix $\lambda\in\mathbb{R}$ and set
\[
t_N(\lambda)
=
\frac{\lambda s_N}{b_N}
=
\lambda\left(\frac{s_N}{N^r}\right)^{\alpha_1},
\qquad
q_\lambda=(\lambda_+)^{1/\alpha_1},
\]
where $\lambda_+=\max\{\lambda,0\}$.
Throughout the proof, write
$K_m=K_m(\alpha_1,\theta_0)$ and set $K_0=0$.
For $y\geq0$, define
\[
F_N(y;\lambda)
=
\frac{1}{s_N}
\log\mathbb{E}
\exp\left\{
t_N(\lambda)K_{\lfloor N^ry\rfloor}
\right\}.
\]
Since the inner partition is independent of $\xi(N)$,
\begin{equation}
\label{eq:K-conditional-logmgf}
\mathbb{E}e^{t_N(\lambda)K_{\xi(N)}}
=
\mathbb{E}
\exp\left\{
s_NF_N(W_N;\lambda)
\right\}.
\end{equation}

Put $\kappa=\frac{r-\alpha_2}{1-\alpha_2}.$
Since $0<\kappa<r$ and $s_N=N^\kappa$, we have
\[
\frac{s_N}{N^r}\to0,
\qquad
\frac{\log(N^r)}{s_N}\to0.
\]
Thus Lemma~\ref{lem:uniform-small-tilt}, applied with
$A_N=N^r$ and $a=\alpha_1$, gives, for every fixed $M>0$,
\[
\varepsilon_N(M)
:=
\sup_{0\leq y\leq M}
\left|F_N(y;\lambda)-q_\lambda y\right|
\longrightarrow0.
\]
Consequently,
\[
\begin{aligned}
&e^{-s_N\varepsilon_N(M)}
\mathbb{E}\left[
e^{s_Nq_\lambda W_N}
\mathbf{1}_{\{W_N\leq M\}}
\right]\\
&\qquad\leq
\mathbb{E}\left[
e^{s_NF_N(W_N;\lambda)}
\mathbf{1}_{\{W_N\leq M\}}
\right]\\
&\qquad\leq
e^{s_N\varepsilon_N(M)}
\mathbb{E}\left[
e^{s_Nq_\lambda W_N}
\mathbf{1}_{\{W_N\leq M\}}
\right].
\end{aligned}
\]
The large deviation upper and lower bounds for $W_N$
therefore imply
\begin{equation}
\label{eq:K-compact-laplace}
\begin{aligned}
&\lim_{N\to\infty}
\frac{1}{s_N}
\log\mathbb{E}\left[
e^{s_NF_N(W_N;\lambda)}
\mathbf{1}_{\{W_N\leq M\}}
\right]\\
&\qquad=
\sup_{0\leq y\leq M}
\left\{
q_\lambda y-I_W(y)
\right\}
=:J_M(\lambda).
\end{aligned}
\end{equation}
Indeed, the upper bound follows from the compact-set
Laplace upper bound. For the lower bound, one may use
open neighborhoods of points in $(0,M)$.
Continuity of $y\mapsto q_\lambda y-I_W(y)$ ensures
that its supremum over $(0,M)$ equals its supremum
over $[0,M]$.

On the other hand,
Lemma~\ref{lem:two-layer-tail-control} yields
\begin{equation}
\label{eq:K-tail-laplace}
\lim_{M\to\infty}\limsup_{N\to\infty}
\frac{1}{s_N}
\log\mathbb{E}\left[
e^{s_NF_N(W_N;\lambda)}
\mathbf{1}_{\{W_N>M\}}
\right]
=-\infty.
\end{equation}
Define
\[
J(\lambda)
=
\sup_{y\geq0}
\left\{
q_\lambda y-I_W(y)
\right\}.
\]
Since $I_W$ grows superlinearly, $J(\lambda)$ is finite,
and $J_M(\lambda)\uparrow J(\lambda)$ as $M\to\infty$.

The compact contribution in
\eqref{eq:K-compact-laplace} gives
\[
\liminf_{N\to\infty}
\frac{1}{s_N}
\log\mathbb{E}e^{t_N(\lambda)K_{\xi(N)}}
\geq J_M(\lambda).
\]
For the upper bound, split the expectation in
\eqref{eq:K-conditional-logmgf} over
$\{W_N\leq M\}$ and $\{W_N>M\}$, and use
\[
\log(u+v)\leq\log2+\max\{\log u,\log v\}.
\]
Combining \eqref{eq:K-compact-laplace} and
\eqref{eq:K-tail-laplace}, and then letting $M\to\infty$,
we obtain
\begin{equation}
\label{eq:K-variational-logmgf}
\lim_{N\to\infty}
\frac{1}{s_N}
\log\mathbb{E}e^{t_N(\lambda)K_{\xi(N)}}
=
\sup_{y\geq0}
\left\{
q_\lambda y-c_{\alpha_2}y^{1/(1-\alpha_2)}
\right\}.
\end{equation}

For every $u\geq0$, direct maximization gives
\[
\sup_{y\geq0}
\left\{
uy-c_{\alpha_2}y^{1/(1-\alpha_2)}
\right\}
=u^{1/\alpha_2}.
\]
Hence \eqref{eq:K-variational-logmgf} becomes
\begin{equation}
\label{eq:two-layer-K-logmgf}
\lim_{N\to\infty}
\frac{1}{s_N}
\log\mathbb{E}
\exp\left\{
\lambda s_N\frac{K_{\xi(N)}}{b_N}
\right\}
=
(\lambda_+)^{1/\rho}
=:\Lambda_K(\lambda).
\end{equation}

Since $0<\rho<1$, the function $\Lambda_K$ is finite
and continuously differentiable on $\mathbb{R}$.
The G\"artner--Ellis theorem therefore yields an LDP
with speed $s_N$ and good rate function
\[
I_K(x)
=
\sup_{\lambda\in\mathbb{R}}
\left\{
\lambda x-\Lambda_K(\lambda)
\right\}.
\]
For $x\geq0$, direct maximization gives
\[
I_K(x)
=
\sup_{\lambda\geq0}
\left\{
\lambda x-\lambda^{1/\rho}
\right\}
=
(1-\rho)\rho^{\rho/(1-\rho)}
x^{1/(1-\rho)}.
\]
For $x<0$, letting $\lambda\to-\infty$ gives
$I_K(x)=+\infty$.

Finally,
\[
b_N
=
N^{r\alpha_1+\kappa(1-\alpha_1)}
=
N^{\rho+(1-\rho)\kappa},
\qquad 0<\kappa<1.
\]
Thus $b_N/N^\rho\to\infty$ and $b_N/N\to0$, so this
LDP is a moderate deviation principle.
\end{proof}

\begin{remark}[Consistency with the coagulation identity]
Consider the special case $d=1$ and
$\theta_0=\theta/\alpha_2$, and write
$\rho=\alpha_1\alpha_2$.
By the Poisson--Dirichlet coagulation identity
\cite[Theorem~5.23]{Pitman2006},
\[
K_{\xi(N)}(\alpha_1,\theta_0)
\overset{d}{=}
K_N(\rho,\theta).
\]
The parameter condition $\theta_0>-\alpha_1$
ensures that $\theta>-\rho$.

Set $\kappa=\frac{r-\alpha_2}{1-\alpha_2}\in(0,1)$.
Since $s_N=N^\kappa$ and
$r=\alpha_2+(1-\alpha_2)\kappa$, we have
\[
b_N
=
N^{r\alpha_1}s_N^{1-\alpha_1}
=
N^\rho s_N^{1-\rho}.
\]
Thus, with $\beta_N=s_N^{1-\rho}$,
\[
\frac{\beta_N}{(\log N)^{1-\rho}}\to\infty,
\qquad
\frac{\beta_N}{N^{1-\rho}}\to0.
\]
The single-layer MDP in
\cite[Theorem~1]{FavaroFengGao2018}
therefore applies with speed
$\beta_N^{1/(1-\rho)}=s_N$ and rate function
\[
I(x)=
\begin{cases}
(1-\rho)\rho^{\rho/(1-\rho)}
x^{1/(1-\rho)}, & x\ge0,\\
+\infty, & x<0.
\end{cases}
\]
This agrees with Theorem~\ref{thm:two-layer-K}
and provides a consistency check in the
coagulation case.
\end{remark}

\begin{theorem}[Two-layer MDP for $M_{l,\xi(N)}$]
\label{thm:two-layer-mdp-M-CW}
Under the assumptions and notation of
Theorem~\ref{thm:two-layer-K}, fix an integer $l\geq1$
and set
\[
\rho=\alpha_1\alpha_2,
\qquad
p_l:=p_{\alpha_1}(l)
=
\frac{\alpha_1(1-\alpha_1)_{(l-1)\uparrow1}}{l!}.
\]
Then $\{M_{l,\xi(N)}/b_N\}_{N\geq1}$ satisfies an MDP
on $\mathbb{R}$ with speed $s_N$ and good rate function
\begin{equation}
\label{eq:two-layer-rate-M-CW}
I_l(x)
=
I_K\left(\frac{x}{p_l}\right)
=
\begin{cases}
\displaystyle
(1-\rho)\rho^{\rho/(1-\rho)}
p_l^{-1/(1-\rho)}
x^{1/(1-\rho)},
& x\geq0,\\[2mm]
+\infty,
& x<0.
\end{cases}
\end{equation}
\end{theorem}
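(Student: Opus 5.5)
The proof will follow the same route as Theorem~\ref{thm:two-layer-K}, with $K_m$ replaced by $X_m=M_{l,m}(\alpha_1,\theta_0)$. We first compute the limiting log-mgf of $M_{l,\xi(N)}/b_N$ at speed $s_N$. Fix $\lambda\in\mathbb{R}$ and set $t_N(\lambda)=\lambda(s_N/N^r)^{\alpha_1}$. Define
\[
F_N^M(y;\lambda)=\frac1{s_N}\log\mathbb{E}\exp\{t_N(\lambda)M_{l,\lfloor N^ry\rfloor}\}.
\]
Because the inner partition is independent of $\xi(N)$, we have $\mathbb{E}e^{t_N(\lambda)M_{l,\xi(N)}}=\mathbb{E}\exp\{s_NF_N^M(W_N;\lambda)\}$, where $W_N=\xi(N)/N^r$.

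We apply Lemma~\ref{lem:uniform-small-tilt} with $a=\alpha_1$, $A_N=N^r$ and $X_m=M_{l,m}$; the scale conditions are checked exactly as in Theorem~\ref{thm:two-layer-K}. This gives, for each $M>0$,
\[
\sup_{0\le y\le M}\bigl|F_N^M(y;\lambda)-q^{(l)}_\lambda y\bigr|\to0,
\qquad
q^{(l)}_\lambda=(p_l\lambda_+)^{1/\alpha_1}.
\]
We also need $p_{\alpha_1}(l)$ in the lemma to equal $p_l$ as stated, i.e.\ $\prod_{j=1}^{l-1}(j-\alpha_1)=(1-\alpha_1)_{(l-1)\uparrow1}$, which holds directly. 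On $\{W_N\le M\}$, the sandwich by $e^{\pm s_N\varepsilon_N(M)}$ combined with the MDP for $W_N$ (Proposition~\ref{prop:MDP-xi}, with $I_W(y)=c_{\alpha_2}y^{1/(1-\alpha_2)}$) and Varadhan-type compact upper and lower bounds yields the compact Laplace limit $\sup_{0\le y\le M}\{q^{(l)}_\lambda y-I_W(y)\}$. The contribution of $\{W_N>M\}$ is negligible by Lemma~\ref{lem:two-layer-tail-control}, which already covers $X_m=M_{l,m}$ through the domination $0\le M_{l,m}\le K_m$. Letting $M\to\infty$ then gives
\[
\lim_{N\to\infty}\frac1{s_N}\log\mathbb{E}\exp\Bigl\{\lambda s_N\frac{M_{l,\xi(N)}}{b_N}\Bigr\}
=\sup_{y\ge0}\{q^{(l)}_\lambda y-c_{\alpha_2}y^{1/(1-\alpha_2)}\}
=(q^{(l)}_\lambda)^{1/\alpha_2}=(p_l\lambda_+)^{1/\rho}.
\]
The middle step is the identity $\sup_y\{uy-c_{\alpha_2}y^{1/(1-\alpha_2)}\}=u^{1/\alpha_2}$ already used in Theorem~\ref{thm:two-layer-K}.

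The limit $\Lambda_l(\lambda)=(p_l\lambda_+)^{1/\rho}=\Lambda_K(p_l\lambda)$ is finite and $C^1$ on $\mathbb{R}$ because $1/\rho>1$. Gärtner--Ellis therefore gives an LDP with speed $s_N$ and rate function $\Lambda_l^*$. The substitution $\mu=p_l\lambda$ gives $\Lambda_l^*(x)=\sup_\mu\{\mu x/p_l-\Lambda_K(\mu)\}=I_K(x/p_l)$, and this is the explicit formula \eqref{eq:two-layer-rate-M-CW}. The value $+\infty$ for $x<0$ comes from sending $\lambda\to-\infty$. Since $b_N$ is the same scale as in Theorem~\ref{thm:two-layer-K}, the remark there that $N^\rho\ll b_N\ll N$ carries over, so this LDP is a moderate deviation principle.

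The main obstacle is the input to Lemma~\ref{lem:uniform-small-tilt} for $M_{l,m}$. We need the single-layer small-tilt log-mgf limit for $M_{l,m}$ with limit $(p_{\alpha_1}(l)\eta_+)^{1/\alpha_1}$, in the deterministic-pair form. The lemma takes this from the proof of Theorem~2 of \cite{FavaroFengGao2018}. We would check that the normalization there matches $p_l$ by Legendre-transforming Lemma~\ref{lem:FFG2}: its rate function should be the transform of $(p_l\lambda_+)^{1/\alpha_1}$. Once this is confirmed, everything else is a verbatim transfer of the argument for Theorem~\ref{thm:two-layer-K}.
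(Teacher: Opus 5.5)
Your proposal is correct and follows essentially the same route as the paper: conditioning on $\xi(N)$, the uniform small-tilt Lemma~\ref{lem:uniform-small-tilt} on $\{W_N\le M\}$, the tail bound of Lemma~\ref{lem:two-layer-tail-control} via $0\le M_{l,m}\le K_m$, the variational identity giving $(p_l\lambda_+)^{1/\rho}$, and G\"artner--Ellis with the substitution $\mu=p_l\lambda$. The normalization check you flag does go through: the Legendre transform of $(p_l\lambda_+)^{1/\alpha_1}$ is $(1-\alpha_1)\alpha_1^{\alpha_1/(1-\alpha_1)}p_l^{-1/(1-\alpha_1)}x^{1/(1-\alpha_1)}$, which equals $I_{\alpha_1,l}(x)$ in Lemma~\ref{lem:FFG2}.
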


\begin{proof}
Fix $\lambda\in\mathbb{R}$ and define
\[
t_N(\lambda)=\frac{\lambda s_N}{b_N},
\qquad
F_N^{(l)}(y;\lambda)
=
\frac{1}{s_N}
\log\mathbb{E}
\exp\left\{
t_N(\lambda)M_{l,\lfloor N^ry\rfloor}
\right\}.
\]
By Lemma~\ref{lem:uniform-small-tilt}, for every $M>0$,
\[
\sup_{0\leq y\leq M}
\left|
F_N^{(l)}(y;\lambda)
-
y(p_l\lambda_+)^{1/\alpha_1}
\right|
\longrightarrow0.
\]
Moreover, Lemma~\ref{lem:two-layer-tail-control}
applies to $X_m=M_{l,m}$. Thus the same conditioning
and truncation argument as in the proof of
Theorem~\ref{thm:two-layer-K} gives
\begin{equation}
\label{eq:two-layer-M-logmgf}
\begin{aligned}
&\lim_{N\to\infty}
\frac{1}{s_N}
\log\mathbb{E}
\exp\left\{
\lambda s_N\frac{M_{l,\xi(N)}}{b_N}
\right\}\\
&\qquad=
\sup_{y\geq0}
\left\{
y(p_l\lambda_+)^{1/\alpha_1}-I_W(y)
\right\}\\
&\qquad=
(p_l\lambda_+)^{1/\rho}.
\end{aligned}
\end{equation}

The limiting log-mgf is finite and continuously
differentiable on $\mathbb{R}$. The G\"artner--Ellis
theorem therefore yields the asserted MDP.
Since $p_l>0$, the change of variables $\mu=p_l\lambda$
in the Legendre transform gives
\[
\begin{aligned}
I_l(x)
&=
\sup_{\lambda\in\mathbb{R}}
\left\{
\lambda x-(p_l\lambda_+)^{1/\rho}
\right\}\\
&=
\sup_{\mu\in\mathbb{R}}
\left\{
\mu\frac{x}{p_l}-(\mu_+)^{1/\rho}
\right\}
=
I_K\left(\frac{x}{p_l}\right).
\end{aligned}
\]
The explicit expression follows from
Theorem~\ref{thm:two-layer-K}.
\end{proof}

%%%%%%%%%%%%%%%%%%%%%%%%%%%%%%%%%%%%%%

\section*{Statements and Declarations}

\begin{description}
\item[Competing interests.] The author declares that there are no competing interests.
\item[Funding.] Nian Yao was supported in part by the Natural Science Foundation of China under Grant 12071361 and Shenzhen Natural Science Fund (the Stable Support Plan Program) 20220810152104001. 
\item[Data availability.] Data sharing is not applicable to this article as no datasets were generated or analyzed.
\item[Code availability.] Not applicable.
\item[Author contributions.] The author contributed solely to this work.
\item[Acknowledgments.] The author gratefully acknowledges the hospitality of the Department of Mathematics and Statistics at McMaster University during her visit from March 2025 to February 2026. She is particularly grateful to Professor Shui Feng for his valuable guidance and insightful discussions. This work was carried out during the visit.
\end{description}

%%%%%%%%%%%%%%%%%%%%%%%%

\bibliographystyle{apalike}
\bibliography{reference-MDP}
\end{document}